\newtheorem{theorem}{Theorem}[section]
 \newtheorem{proposition}[theorem]{Proposition}
  \newtheorem{corollary}[theorem]{Corollary}
\theoremstyle{definition}
\newtheorem{definition}[theorem]{Definition}
\theoremstyle{remark}
\newtheorem{remark}[theorem]{Remark}
\numberwithin{equation}{section}
  \renewcommand{\cH}{{\mathcal H}}
  \newcommand{\cK}{{\mathcal K}}
  \newcommand{\cQ}{{\mathcal Q}}
  \newcommand{\cA}{{\mathcal A}}
  \newcommand{\cE}{{\mathcal E}}
  \renewcommand{\cD}{{\mathcal D}}
  \newcommand{\cP}{{\mathcal P}}
  \newcommand{\cG}{{\mathcal G }}
\newcommand{\cFr}{{\mathcal Fr }}
 \newcommand{\Cliff}{{\bf  Cliff}}
\newcommand{\cKa}{K_{{\bf an} }}
\newcommand{\cKt}{K_{\bf{top}} }
\newcommand{\hKa}{K^{\bf{an}} }
\newcommand{\hKt}{K^{\bf{top}} }
\newcommand{\hKg}{K^{\bf{geo}} }
   \newcommand{\ba}{\begin{eqnarray}}
   \newcommand{\na}{\end{eqnarray}}
   \newcommand{\ban}{\begin{eqnarray*}}
   \newcommand{\nan}{\end{eqnarray*}}
     \newcommand{\vlim}{\varinjlim}
   \newcommand{\s}{{\mathfrak s}}
  \newcommand{\CC}{{\mathbb C}}
  \newcommand{\RR}{{\mathbb R}}
  \newcommand{\ZZ}{{\mathbb Z}}
  \newcommand{\QQ}{{\mathbb Q}}
  \newcommand{\NN}{{\mathbb  N}}
    \newcommand{\KK}{{\mathbb  K}}
 \newcommand{\CL}{{\bf  Cliff}}
  \newcommand{\ind}{{\bf  Index }}
\newcommand{\BSpin}{{\bf  BSpin}}
  \newcommand{\BSO}{{\bf  BSO}}
   \newcommand{\Fred}{{\bf  Fred}}
  \renewcommand{\a}{\alpha}
  \renewcommand{\b}{\beta}
   \renewcommand{\i}{\iota}
  \newcommand{\eps}{\epsilon}
  \newcommand{\pa}{\partial}
    \newcommand{\disp}{\displaystyle}
  \newcommand{\nin}{\noindent}
  \newcommand{\<}{\langle}
  \renewcommand{\>}{\rangle}
  \def\cancel#1#2{\ooalign{$\hfil#1\mkern1mu/\hfil$\crcr$#1#2$}}
\def\Dirac{\mathpalette\cancel D}
\begin{document}

\title{Riemann-Roch and  index formulae  in twisted $K$-theory}

  \author{Alan L. Carey}
\address{Mathematical Sciences Institute\\
  Australian National University\\
  Canberra ACT 0200 \\
  Australia}
  \email{acarey@maths.anu.edu.au}

  \author{Bai-Ling Wang}
  \address{Department of Mathematics\\
  Australian National University\\
  Canberra ACT 0200 \\
  Australia}
  \email{wangb@maths.anu.edu.au}

\subjclass{Primary 54C40, 14E20; Secondary 46E25, 20C20}
\date{}


\keywords{Twisted  $K$-theory,  twisted $K$-homology, twisted Riemann-Roch,  twisted index theorem, D-brane charges}

   \begin{abstract} In this paper, we establish the Riemann-Roch theorem in twisted  $K$-theory extending our earlier results. We also give a careful summary of
   twisted geometric cycles explaining in detail some subtle points in the theory. As an application, we prove a twisted index formula  and  show that  D-brane charges in Type I and Type II  string theory are classified by twisted KO-theory and twisted  $K$-theory respectively in the presence of $B$-fields as proposed by Witten. 
  
  \end{abstract}

\maketitle

\tableofcontents

\newpage

\section{Introduction}

\subsection{String geometry}
We begin by giving a short discussion of the physical background.
Readers uninterested in this motivation may move to the next subsection.
In string theory D-branes were proposed as a mechanism for 
providing boundary conditions for the dynamics of open strings
moving in space-time. Initially they were thought of as
submanifolds. As D-branes themselves can evolve over time one needs to study equivalence relations on the set of D-branes. An invariant of the equivalence class is the topological charge of the
D-brane which should be thought of as an analogue of the Dirac monopole charge as these D-brane charges are associated with 
gauge fields (connections) on vector bundles over the D-brane.
These vector bundles are known as Chan-Paton bundles.

In \cite{MM}  Minasian and Moore made the proposal that D-brane charges should take values in $K$-groups and not in the cohomology of the space-time or the D-brane. However, they proposed a 
cohomological formula for these charges which might be thought of as
a kind of index theorem in the sense that, in general,  index theory associates to a $K$-theory class a number which is given by an integral of a closed differential form. In string theory
there is an additional field on space-time known as the $H$-flux which may be thought of as a global closed three form. Locally it is given by a family of `two-form potentials' known as the $B$-field. 
Mathematically we think of these $B$-fields as defining a degree three integral \v{C}ech class on the space-time, called a `twist'.
Witten \cite{Wit1}, extending \cite{MM}, gave a physical argument for the idea that
D-brane charges should be elements of $K$-groups and, in addition, proposed that the D-brane charges in the presence of a twist should take values in twisted $K$-theory (at least in the case where
the twist is torsion). The mathematical ideas he relied on were due to Donovan and Karoubi
\cite{DK}. Subsequently Bouwknegt and Mathai \cite{BouMat} 
extended Witten's proposal to the non-torsion case using ideas from \cite{Ros}. A geometric model (that is, a `string geometry' picture) for some of these string theory constructions and for twisted $K$-theory  was proposed in 
\cite{BCMMS} using the notion of bundle gerbes and bundle gerbe modules.
 Various refinements of twisted  $K$-theory that are suggested by these applications are also described in the article of Atiyah and Segal  \cite{AS1} and we will need to use their results here.

\subsection{Mathematical results}
From a mathematical perspective some immediate questions arise from the physical input summarised above. When there is no twist it is well known that $K$-theory provides the main topological tool for the index theory of elliptic operators. One version of the Atiyah-Singer index theorem due to Baum-Higson-Schick 
\cite{BHS} establishes a relationship between the analytic viewpoint provided by elliptic differential operators and the geometric viewpoint provided by 
the notion of geometric cycle introduced in the fundamental paper of Baum and Douglas \cite{BD2}. The viewpoint that geometric cycles in the sense of \cite{BD2} are a model for D-branes in the untwisted case is expounded in
\cite{RS,RSV,Sz}. Note that in this viewpoint D-branes are no longer submanifolds but the images of manifolds under a smooth map.

It is thus tempting to conjecture that there is an analogous picture of D-branes as a type of geometric cycle in the twisted case as well.
More precisely we ask the question of whether there is a way to formulate the notion of `twisted geometric cycle'  (cf \cite{BD1} and \cite{BD2}) and to prove an index theorem in the spirit of \cite{BHS} for twisted $K$-theory. This precise question was answered in the positive in \cite{Wang}. It is important to emphasise that string geometry ideas from \cite{FreWit}
played a key role in finding the correct way to generalise \cite{BD1}.

Our purpose here in the present paper is threefold.
First, we explain the results in \cite{Wang} (see Section 5)
in a fashion that is more aligned to the string geometry viewpoint.
Second, we prove an analogue of the Atiyah-Hirzebruch  Riemann-Roch formula in twisted  $K$-theory by extending the results and approach of \cite{CMW}. An interesting by-product of our approach
in Section 5 is a discussion of the Thom class in
twisted K theory. Third, in Section 6 we prove  an index formula 
using our twisted Riemann-Roch theorem. It will be clear from our approach to this twisted index theory that our twisted geometric cycles provide a geometric model for D-branes and we give details in Section 7.

Our main new results are stated as
two theorems,  Theorem \ref{RR} (twisted Riemann-Roch)
and Theorem \ref{index} (the index pairing).
We remark that the Minasian-Moore formula \cite{MM}
arises from the fact that the index pairing they discuss may be regarded as a quadratic form on $K$-theory. In the twisted index formula that we establish, the pairing is asymmetric and may be
thought of as a bilinear form, from which there is no obvious way to
extract a twisted analogue of the Minasian-Moore formula. Nevertheless we interpret our results in terms of
the physics language in Section 7 explaining the link to Witten's original ideas 
on D-brane charges.



\section{Twisted  $K$-theory: preliminary review}

\subsection{Twisted  $K$-theory: topological and analytic definitions}

We begin by reviewing the notion of a `twisting'.
Let $\cH$ be an  infinite dimensional, complex and separable Hilbert space. We shall consider locally trivial  principal $PU(\cH)$-bundles over a
paracompact  Hausdorff topological space $X$,    the structure group $PU(\cH)$ is equipped with the norm topology.  
The projective unitary group $PU(\cH)$ with the topology induced by the norm topology on $U(\cH)$ (Cf. \cite{Kui})  has the homotopy type of an 
Eilenberg-MacLane space $K(\ZZ, 2)$. 
The  classifying space of $PU(\cH)$, denoted $BPU(\cH)$,     is a $K(\ZZ, 3)$. 
The set of isomorphism classes of 
principal $PU(\cH)$-bundles over   $X$ is given by  (Proposition 2.1 in \cite{AS1}) homotopy classes of maps 
from $X$ to any $K(\ZZ,3)$ and there is a canonical identification
$$[X, BPU(\cH)] \cong H^3(X, \ZZ).$$

  A twisting of  complex $K$-theory on $X$ is given by   a  continuous map $\a: X\to K(\ZZ, 3)$. For such a twisting,
   we can associate a  canonical principal $PU(\cH)$-bundle
 $\cP_\a$ through  the usual pull-back construction from the universal 
 $PU(\cH)$ bundle denoted by $EK(\ZZ, 2)$, as summarised by the diagram:
\ba\label{bundle}
\xymatrix{\cP_\a  \ar[d]   \ar[r]  &
EK(\ZZ, 2) 
  \ar[d]  \\
X \ar[r]_\a  & K(\ZZ, 3). } 
\na
   We will use $ PU(\cH)$ as a group model for a $K(\ZZ,2)$.
   We write ${\bf Fred}(\cH)$ for the connected component of the identity of the space of Fredholm operators on $\cH$
   equipped with the norm topology. There is a base-point preserving action 
   of $PU(\cH)$ given by the conjugation action of $U(\cH)$ on $\Fred(\cH)$:
\ba\label{action}
PU(\cH) \times  \Fred (\cH) \longrightarrow \Fred (\cH).
\na 

The action (\ref{action})  defines an associated bundle over $X$
which we  denote by  
\[
\cP_\a (\Fred) = \cP_\a\times_{PU(\cH)} {\mathbf{Fred}}(\cH)
\]
We write  $\{ \Omega^n_X \cP_\a(\Fred) = 
 \cP_\a\times_{PU(\cH)} \Omega^n \Fred \}$  for the   fiber-wise iterated loop spaces.

 \begin{definition} The (topological)  twisted K-groups  of $(X, \a)$ are defined to be
\[
K^{-n}(X, \a) := \pi_0\bigl( C_c(X, \Omega^n_X \cP_\a(\Fred))\bigr),
\]
the  set of homotopy classes of compactly supported sections (meaning they are the identity operator in $\Fred$  off a compact set) of the bundle of $\cP_\a (\Fred)$.
 \end{definition}

Due to Bott periodicity, we only have two different  twisted K-groups $K^0(X, \a)$ and
$K^1(X, \a)$. 
Given  a closed subspace $A$  of $X$, then $(X, A)$ is   a pair of topological spaces, and
  we define relative 
 twisted K-groups  to be 
 \[
 K^{ev/odd}(X, A;  \a) := K^{ev/odd}(X-A, \a).
 \]
 
Take a pair of twistings $\a_0, \a_1: X \to K(\ZZ, 3)$, 
and a map $\eta: X\times [1, 0] \to K(\ZZ, 3)$ which is a homotopy 
between  $\a_0$ and $\a_1$, represented diagrammatically by
\[
  \xymatrix{X  \ar@/^2pc/[rr]_{\quad}^{\a_0}="1"
\ar@/_2pc/[rr]_{\a_1}="2"
&& K(\ZZ, 3).
\ar@{}"1";"2"|(.2){\,}="7"
\ar@{}"1";"2"|(.8){\,}="8"
\ar@{==>}"7" ;"8"^{\eta} }
  \]
 Then there is a canonical isomorphism 
$\cP_{\a_0} \cong \cP_{\a_1}$  induced by $\eta$.  This canonical isomorphism
determines a canonical  isomorphism on twisted K-groups 
\ba\label{iso:eta}\xymatrix{ 
\eta_*:  \  K^{ev/odd}(X,  \a_0) \ar[r]^{\cong}  & K^{ev/odd}(X,  \a_1),}
\na
This isomorphism $\eta_*$  depends only on the homotopy class of $\eta$.     
The set of  homotopy classes of maps between  $\a_0$ and $\a_1$ is labelled by 
$[X, K(\ZZ,   2)]$. Recall the first Chern class  isomorphism
\[
{\bf Vect}_1(X) \cong [X, K(\ZZ,   2)] \cong H^2(X, \ZZ)
\]
where ${\bf Vect}_1(X)$ is 
 the set of equivalence classes of complex line bundles on $X$.  We remark that the isomorphisms 
induced by  two different homotopies 
between  $\a_0$ and $\a_1$   are related through an action of complex line bundles.

  Let $\cK $ be the $C^*$-algebra  of  compact 
  operators on $\cH$.  
The isomorphism $PU(\cH) \cong Aut ( \cK)$ via the conjugation action of the unitary group
$U(\cH)$ provides an action of a
$K(\ZZ, 2)$ on the $C^*$-algebra  $\cK$. Hence, any
$K(\ZZ, 2)$-principal bundle $\cP_\a$ defines a locally trivial  bundle of compact operators,
denoted by 
$
\cP_\a(\cK) = \cP_\a\times_{PU(\cH)} \cK.
$

Let  $C_0(X, \cP_\a(\cK))$ be  the $C^*$-algebra of sections of 
$\cP_\a(\cK)$ vanishing at infinity.   Then $C_0(X, \cP_\a(\cK)$ is  the (unique up to isomorphism) stable separable complex  continuous-trace $C^*$-algebra over $X$ with 
Dixmier-Douday class $[\a] \in H^3(X, \ZZ)$ (here we identify the \v{C}ech cohomology of $X$ with its
singular cohomology, cf \cite{Ros} and \cite{AS1}).

\begin{theorem} \label{twited:K:top=ana} ( \cite{AS1} and \cite{Ros}) The topological  twisted K-groups $K^{ev/odd}(X, \a)$
are canonically isomorphic to analytic  $K$-theory  of  the
 $C^*$-algebra  $C_0(X, \cP_\a(\cK))$
\[
K^{ev/odd}(X, \a) \cong K_{ev/odd} (C_0(X, \cP_\a(\cK))) 
\]
where the latter group is  the algebraic  $K$-theory of  $C_0(X, \cP_\a(\cK))$, defined to be
\[
\vlim_{k\to \infty} \pi_1\bigl(GL_k(C_0(X, \cP_\a(\cK)))\bigr).
\]
Note that   the algebraic  $K$-theory of  $C_0(X, \cP_\a(\cK))$ is isomorphic to  KasparovÕs $KK$-theory  
(\cite{Kas} and \cite{Kas1})
\[
KK^{ev/odd}(\CC, C_0(X, \cP_\a(\cK)).
\]

\end{theorem}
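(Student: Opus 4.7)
The plan is to establish the two claimed isomorphisms in sequence, following the strategy of Rosenberg \cite{Ros} and Atiyah-Segal \cite{AS1}. Write $A := C_0(X, \cP_\a(\cK))$ for brevity. The bridge between the topological and analytic sides is a natural map $\Phi : K^{-n}(X,\a) \to K_n(A)$ constructed as follows: from the action (\ref{action}) form the associated twisted Hilbert bundle $\cP_\a(\cH) := \cP_\a \times_{PU(\cH)} \cH$, and let $E$ be the Hilbert $A$-module of its continuous sections vanishing at infinity. Then a compactly supported section $s$ of $\Omega^n_X \cP_\a(\Fred)$ acts pointwise on $E$ to give an adjointable operator $F_s$ with $F_s^2 - 1$ and $F_s^* - F_s$ compact modulo $A$, thereby defining a Kasparov $(\CC, A)$-cycle. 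Homotopies of sections manifestly produce operator homotopies of cycles, so $\Phi$ descends to $K^{-n}(X, \a)$.

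For the inverse direction I would invoke Kasparov's stabilization theorem together with Voiculescu's absorption theorem: any Kasparov $(\CC, A)$-cycle is homotopic to one whose underlying Hilbert module is the standard $E$, and in such a normalized form the Fredholm operator is precisely a $PU(\cH)$-equivariant Fredholm section, i.e.\ a section of $\cP_\a(\Fred)$. The second isomorphism $K_{ev/odd}(A) \cong KK^{ev/odd}(\CC, A)$ is then the foundational identification in Kasparov's bivariant theory \cite{Kas}: a Kasparov $(\CC, A)$-cycle is by definition the data producing a class in $K_0(A)$ via the analytic index of its Fredholm operator, with the odd case handled by self-adjoint cycles or suspension (equivalently via the six-term exact sequence linking $K_0$ and $K_1$).

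The main obstacle is the surjectivity and well-definedness of $\Phi$. The key subtlety is the absorbing property of $E$: one must show that every countably generated Hilbert $A$-module can be added in homotopy to a multiple of $E$ without altering the $KK$-class, and that an arbitrary Kasparov Fredholm operator can be deformed into one compatible with the $PU(\cH)$-equivariance coming from $\cP_\a$. This rests on Kuiper's norm-topology contractibility of $U(\cH)$, lifted to the twisted setting via the universality of the $PU(\cH)$-bundle $\cP_\a$. A further subtlety, particularly when $[\a] \in H^3(X, \ZZ)$ is non-torsion, is the correct identification of $A$ with the stable continuous-trace $C^*$-algebra of Dixmier-Douady class $[\a]$; this relies on the classification theorem for continuous-trace algebras recorded in \cite{Ros} and is where paracompactness of $X$ and separability of $\cH$ enter essentially.
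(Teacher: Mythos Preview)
The paper does not actually supply a proof of this theorem: it is stated with attribution to Atiyah--Segal \cite{AS1} and Rosenberg \cite{Ros}, and no argument appears in the text beyond the citations. There is therefore nothing in the paper to compare your proposal against.

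That said, your sketch is broadly in line with the arguments in those references, particularly the Atiyah--Segal treatment, which identifies sections of the associated Fredholm bundle with Fredholm operators on the standard Hilbert module over the continuous-trace algebra. One small correction: for the inverse direction the relevant tool is Kasparov's stabilization theorem (every countably generated Hilbert $A$-module is a complemented summand of the standard module $\cH_A \cong E$), not Voiculescu's absorption theorem, which concerns approximate unitary equivalence of representations on Hilbert \emph{spaces}. The two are cousins but not interchangeable here; it is stabilization that lets you normalize an arbitrary Kasparov $(\CC,A)$-cycle to one carried by $E$, after which the operator is visibly a section of $\cP_\a(\Fred)$. With that adjustment your outline is a faithful summary of the standard proof.
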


It is important to recognise that these groups are only defined up to isomorphism by the Dixmier-Douady class $[\a] \in H^3(X, \ZZ)$.
To distinguish these two equivalent definitions of twisted  $K$-theory if needed, we will write
\[
\cKt^{ev/odd}(X, \a) \qquad {\text{and} }  \qquad \cKa^{ev/odd}(X, \a)
\]
 for the topological and analytic twisted  $K$-theories of
$(X, \a)$ respectively.
Twisted  $K$-theory  is a 2-periodic  {\em generalized cohomology theory}: 
 a contravariant functor on the category
 consisting of  pairs $(X, \alpha)$, with the twisting $\a: X\to K(\ZZ, 3)$, to the category of
 $\ZZ_2$-graded abelian groups.    Note that a  morphism  between two pairs
 $(X, \a)$ and $(Y, \b)$ is a continuous map $f: X\to Y$ such that $ \b  \circ f =\a$. 
 
\subsection{Twisted  $K$-theory for torsion twistings}
 There are some subtle issues in twisted  $K$-theory and to handle these we
 have chosen to use  the language of bundle gerbes, connections and curvings
 as explained in \cite{Mur}.  We explain first the  so-called `lifting bundle gerbe' $\cG_\a$ \cite{Mur} associated to the principal $PU(\cH)$-bundle $\pi: \cP_\a\to X$ and 
 the central extension
\ba\label{cen:ext}
1\to U(1) \longrightarrow U(\cH) \longrightarrow PU(\cH) \to 1.
\na
This is constructed by starting with $\pi:  \cP_\a\to X$, forming the fibre product $\cP_\a  ^{[2]}$
which is a  groupoid
\[\xymatrix{
\cP_\a^{[2]} = \cP_\a \times_X \cP_\a  \ar@< 2pt>[r]^{\qquad  \pi_1} \ar@< -2pt>[r]_{ \qquad  \pi_2}  &  \cP_\a}
\]
 with 
source and range maps $\pi_1: (y_1, y_2) \mapsto y_1$ and $\pi_2: (y_1, y_2)\mapsto y_2$. There is an obvious map from each fiber of $\cP_\a^{[2]}$ to $PU(\cH)$ and so we can define
the fiber of  $\cG_\a$ over a point in $\cP_\a^{[2]}$ by pulling back the 
fibration (\ref{cen:ext}) using this map. 
This endows $\cG_\a$ with  a groupoid 
structure (from the multiplication in $U(\cH)$) and in fact it is a $U(1)$-groupoid extension of $\cP_\a  ^{[2]}$.

A torsion twisting $\a$ is a map $\a:  X\to K(\ZZ, 3)$ representing a torsion class in $H^3(X, \ZZ)$. Every torsion twisting arises from  a principal $PU(n)$-bundle 
$ \cP_\a(n)$ with its classifying map
\[
X\to BPU(n), 
\]
or a principal $PU(\cH)$-bundle with a reduction to 
$PU(n) \subset PU(\cH)$.  
For a torsion twisting $\a: X\to BPU(n) \to BPU(\cH)$,  the corresponding lifting bundle gerbe $\cG_a$ 
\ba\label{bg:sigma}
\xymatrix{
 \cG_\a \ar[d]& \\
 \cP_\a(n)^{[2]}\ar@< 2pt>[r]^{\pi_1} \ar@< -2pt>[r]_{\pi_2}
  & \cP_\a(n)    \ar[d]^{\pi}\\
   &M}
\na
is defined by $ \cP_\a(n)^{[2]}\cong  \cP_\a(n) \rtimes PU(n) \rightrightarrows \cP_\a(n)$  (as a groupoid) and the   central extension
\[
1\to U(1) \longrightarrow U(n) \longrightarrow PU(n) \to 1.
\]

There is an Azumaya bundle associated to $\cP_\a(n)$
arising naturally from the $PU(n)$ action on the $n\times n$
matrices. We denote this associated Azumaya 
 bundle by $\cA_\a$.    An $\cA_\a$-module is a complex vector
  bundle $\cE$  over $M$ with a fiberwise $\cA_\a$ action
   $$
   \cA_\sigma \times_M \cE \longrightarrow \cE.
   $$
The  $C^*$-algebra of  continuous  sections  of $\cA_\a$, vanishing at infinity  if $X$ is non-compact,  is Morita equivalent
 to a continuous trace $C^*$-algebra $C_0(X, \cP_\a(\cK))$. Hence there is an isomorphism 
between  $K^0 (X, \a)$ and the  $K$-theory of the bundle modules of $\cA_a$.

There is  an equivalent definition of  twisted  $K$-theory using bundle gerbe modules (Cf. \cite{BCMMS} and \cite{CW1}).  A bundle gerbe module $E$ of $\cG_\a$
 is   a complex vector bundle $E$ over $\cP_\a(n)$ with
a groupoid action of $\cG_\a$, i.e., an isomorphism
\[
\phi: \cG_\a \times_{(\pi_2,p )} E \longrightarrow E
\]
where $\cG_\a \times_{(\pi_2,\pi)} E$ is the fiber product of
the source $\pi_2: \cG_\a \to \cP_\a(n)$  and $p: E\to
\cP_\a(n)$ such that
\begin{enumerate}
\item $p\circ \phi (g, v) = \pi_1(g)$ for $(g, v) \in  \cG_\a
\times_{(\pi_2, p)} E$, and $\pi_1$ is the target map of $\cG_\a$.
\item $\phi$ is compatible with the bundle gerbe multiplication
$m: \cG_a \times_{(\pi_2,\pi_1)}\cG_\a \to \cG_\a$,
which means
\[
\phi \circ (id \times \phi) = \phi\circ (m\times id).
\]
\end{enumerate}

 Note that   the  natural representation of $U(n)$ on $\CC^n$
 induces a $\cG_\a$ bundle gerbe module 
 \[
 S_n = \cP_\a(n) \times \CC^n.
 \]
 Here we use the fact that $\cG_\a =  \cP_\a(n) \rtimes U(n) \rightrightarrows \cP_\a(n)$  (as a groupoid). Similarly, 
 the dual representation of  $U(n)$ on $\CC^n$
 induces a $\cG_{-\a}$ bundle gerbe module $S_n^* =  \cP_\a(n) \times \CC^n$.
 Note that 
$
 S^*_n \otimes S_n  \cong  \pi^*\cA_\a
 $
 descends to the  Azumaya bundle $\cA_\a$.
 Given a  $\cG_\a$ bundle gerbe module $E$ of rank $k$, then as a $PU(n)$-equivariant vector bundle, 
 $S^*_n\otimes E$ descends to an $\cA_\a$-bundle over $M$. Conversely, given 
 an $\cA_\a$-bundle $\cE$ over $M$, $S_n\otimes_{\pi^*\cA_\a} \pi^* \cE$ defines a 
 $\cG_\a$ bundle gerbe module. These two constructions are inverse to each other due to the fact that
 \[
 S_n^* \otimes (S_n \otimes_{\pi^*\cA_\a} \pi^* \cE) \cong (S_n^* \otimes S_n ) \otimes_{\pi^*\cA_\a} \pi^* \cE \cong \pi^*\cA_\a \otimes_{\pi^*\cA_\a}  \pi^* \cE  \cong \pi^* \cE .
 \]
   Therefore,
 there is a natural equivalence between the category of 
 $\cG_\a$ bundle gerbe modules and the category of  $\cA_\a$ bundle modules, as discussed in \cite{CW1}.  In summary, we have the following proposition.
 
  \begin{proposition} (\cite{BCMMS}\cite{CW1}) For a torsion twisting $\a: X\to BPU(n) \to BPU(\cH)$,   twisted $K$-theory $K^0(X, \a) $ has  another   two equivalent descriptions:
  \begin{enumerate}
 \item   the Grothendieck group of the category of  $\cG_\a$ bundle gerbe modules.
\item   the Grothendieck group of the category of $\cA_\sigma$  bundle modules.
\end{enumerate}
  \end{proposition}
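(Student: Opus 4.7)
The plan is twofold: first, to verify that the constructions sketched just above the statement in fact yield an equivalence of categories between $\cG_\a$ bundle gerbe modules and $\cA_\a$ bundle modules; second, to identify the common Grothendieck group with the analytic twisted $K$-theory $K^0(X,\a)$ via Theorem \ref{twited:K:top=ana} together with Morita equivalence of the relevant continuous-trace $C^*$-algebras.

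For the first step, the two functors $E \mapsto S_n^* \otimes E$ and $\cE \mapsto S_n \otimes_{\pi^* \cA_\a} \pi^* \cE$ are already written down, and the displayed chain
\[
S_n^* \otimes (S_n \otimes_{\pi^*\cA_\a} \pi^*\cE) \cong (S_n^* \otimes S_n) \otimes_{\pi^*\cA_\a} \pi^*\cE \cong \pi^*\cE
\]
shows that one composite is the identity on objects. I would make this functorial by defining the induced maps on morphisms, checking that the image morphisms are $\cG_\a$-equivariant (resp.\ $\cA_\a$-linear), verifying compatibility with direct sums, and promoting the object-level identity to a natural isomorphism of functors (similarly for the other composite, using $S_n \otimes_{\pi^*\cA_\a} \pi^*(S_n^* \otimes E) \cong E$ via the gerbe action $\phi$). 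Passing to isomorphism classes and Grothendieck completing gives (1) $\cong$ (2).

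For the second step, let $A_\a := C_0(X, \cA_\a)$ be the $C^*$-algebra of continuous sections of $\cA_\a$ vanishing at infinity. A Serre--Swan type argument identifies finite-rank $\cA_\a$-bundles over $X$ with finitely generated projective right $A_\a$-modules, and hence identifies the Grothendieck group in (2) with $K_0(A_\a)$. Now $A_\a$ is Morita equivalent to the stable continuous-trace algebra $C_0(X, \cP_\a(\cK))$: both algebras have Dixmier-Douady class $[\a] \in H^3(X,\ZZ)$, and by the classification of continuous-trace algebras they are Morita equivalent up to stabilisation. Morita invariance of $C^*$-algebraic $K$-theory yields $K_0(A_\a) \cong K_0(C_0(X, \cP_\a(\cK)))$, and the right-hand side is $\cKa^0(X,\a)$, canonically identified with $K^0(X,\a)$ by Theorem \ref{twited:K:top=ana}.

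The main obstacle I anticipate is the Serre--Swan step for non-compact paracompact $X$ with a bundle of nonunital matrix algebras: the classical correspondence must be adapted, either by working with compactly supported sections and a direct limit over compact exhaustions of $X$, or by spelling out what ``finite rank'' and ``vanishing at infinity'' mean for $\cA_\a$-modules so that the resulting category matches finitely generated projective $A_\a$-modules. A secondary compatibility check is that the chain of identifications is independent of the auxiliary choice of $PU(n)$-reduction of $\cP_\a$, which follows because a different reduction changes $\cA_\a$ by a Morita equivalence compatible with the gerbe action of $\cG_\a$.
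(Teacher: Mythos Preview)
Your proposal is correct and follows essentially the same route as the paper: the equivalence (1)$\Leftrightarrow$(2) is established via the mutually inverse functors $E\mapsto S_n^*\otimes E$ and $\cE\mapsto S_n\otimes_{\pi^*\cA_\a}\pi^*\cE$ exactly as in the discussion preceding the proposition, and the identification of (2) with $K^0(X,\a)$ is made through the Morita equivalence of $C_0(X,\cA_\a)$ with $C_0(X,\cP_\a(\cK))$ and Theorem~\ref{twited:K:top=ana}. Your explicit invocation of a Serre--Swan argument and your caveat about the non-compact case make more precise what the paper leaves implicit, but the strategy is the same.
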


One important  example of torsion twistings comes  from real oriented vector bundles.  Consider an  oriented real vector bundle $E$ of even rank over $X$
with a fixed fiberwise inner product.  
Denote by 
 \[
 \nu_E: X\to \BSO(2k)
 \]
 the classifying map of $E$. The following twisting
 \[
o(E) :=W_3\circ \nu_E: X \longrightarrow  \BSO(2k)  \longrightarrow   K(\ZZ, 3), 
\]
will be called the orientation twisting  associated to $E$.  Here $W_3$ is the classifying map
of the principal $\mathbf{BU}(1)$-bundle $\BSpin^c (2k) \to \BSO (2k)$. Note that 
 the orientation twisting $o(E)$ is null-homotopic if and only if $E$ is K-oriented.

  \begin{proposition} \label{Cliff} Given an  oriented real vector bundle $E$ of even rank over $X$ with 
  an orientation twisting $o(E)$, then there is a canonical isomorphism 
  \[
  K^0(X, o(E)) \cong K^0(X, W_3(E))
  \]
  where $K^0(X, W_3(E))$ is the  $K$-theory of the Clifford modules associated to the bundle  $\Cliff(E)$ of 
  Clifford  algebras.
  \end{proposition}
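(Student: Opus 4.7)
The plan is to realise the orientation twisting $o(E)$ by a canonical Azumaya bundle whose modules are exactly the Clifford modules of $\Cliff(E)$, and then to invoke the preceding proposition identifying $K^0(X,\a)$ with the Grothendieck group of modules over an Azumaya bundle representing $\a$.

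Concretely, since $E$ has even rank $2k$, the complexified fibre Clifford algebra $\Cliff_{2k}\otimes_{\RR}\CC$ is isomorphic to $M_{2^k}(\CC)$, so the complexified Clifford bundle
\[
\cA \ := \ \Cliff(E)\otimes_{\RR}\CC \ \cong \ P_{SO}(E)\times_{SO(2k)}\bigl(\Cliff_{2k}\otimes_{\RR}\CC\bigr)
\]
is an Azumaya bundle of fibre dimension $2^k$ over $X$, and a Clifford module over $\Cliff(E)$ is by definition (after complexification) an $\cA$-module. It therefore suffices to show that the classifying twisting of $\cA$ is canonically homotopic to $o(E)$, so that the preceding proposition supplies the desired isomorphism.

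To do this, I would invoke the complex spin representation, which fits into a morphism of central $U(1)$-extensions
\[
\xymatrix{
1\ar[r] & U(1)\ar@{=}[d]\ar[r] & \mathrm{Spin}^c(2k)\ar[d]\ar[r] & SO(2k)\ar[d]\ar[r] & 1\\
1\ar[r] & U(1)\ar[r] & U(2^k)\ar[r] & PU(2^k)\ar[r] & 1.
}
\]
The principal $PU(2^k)$-bundle underlying $\cA$ is obtained from $P_{SO}(E)$ by extending the structure group along the right-hand vertical arrow, so its classifying map into $BPU(\cH)$ is the composite $X\xrightarrow{\nu_E}\BSO(2k)\to BPU(2^k)\to BPU(\cH)$. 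The Dixmier-Douady obstruction to lifting a $PU(2^k)$-bundle to a $U(2^k)$-bundle, read through this diagram, coincides with the obstruction to lifting an $SO(2k)$-bundle to $\mathrm{Spin}^c(2k)$, which is by definition $W_3$. Hence the classifying twisting of $\cA$ is canonically $o(E)=W_3\circ\nu_E$, and the preceding proposition then gives the asserted isomorphism $K^0(X,o(E))\cong K^0(X,W_3(E))$.

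The main subtlety, and the point requiring the most care, is the word \emph{canonical} in the conclusion. As remarked earlier in the excerpt, two homotopies between given twistings differ by the action of $[X,K(\ZZ,2)]\cong H^2(X,\ZZ)$ via tensoring by line bundles, so one must single out a preferred homotopy between the classifying map of $\cA$ and $o(E)$. This is achieved by the commutative square above, whose naturality ensures the induced identification of twisted $K$-theories is intrinsic to the bundle $E$ with its chosen orientation and inner product, and does not depend on auxiliary choices such as the isomorphism $\Cliff_{2k}\otimes\CC\cong M_{2^k}(\CC)$.
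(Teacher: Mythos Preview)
Your proposal is correct and follows essentially the same route as the paper: both identify the complexified Clifford bundle as an Azumaya bundle of fibre dimension $2^k$, use the morphism of central extensions $Spin^c(2k)\to U(2^k)$ over $SO(2k)\to PU(2^k)$ to see that this Azumaya bundle (equivalently, the associated $PU(2^k)$-reduction of $\cP_{o(E)}$) represents the twisting $o(E)=W_3\circ\nu_E$, and then invoke the preceding proposition on Azumaya/bundle gerbe modules. Your explicit discussion of why the identification is canonical is a useful addition not spelled out in the paper.
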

       \begin{proof}
 Denote by $\cFr$
the frame bundle of $V$, the principal $SO(2k)$-bundle of
positively oriented orthonormal frames, i.e.,
$$
E= \cFr\times_{\rho_{2n}} \RR^{2k}, $$
where $\rho_n$ is the standard representation of $SO(2k)$ on $\RR^n$.
 The lifting bundle gerbe  associated to the frame bundle and
the central   extension
  \[
  1\to U(1) \longrightarrow Spin^c(2k) \longrightarrow SO(2k) \to 1
  \]
  is called the $Spin^c$ bundle gerbe  $\cG_{W_3(E)}$ of $E$, whose  Dixmier-Douady invariant is  given by  the integral third Stiefel-Whitney class   $W_3(E)\in H^3(X, \ZZ)$.  The canonical representation of $Spin^c(2k)$ gives a natural inclusion
  \[
  Spin^c(2k) \subset U(2^{k})
  \]
  which induces a commutative diagram
  \[
  \xymatrix{
U(1)\ar[r] \ar[d]^{=} &   Spin^c(2k) \ar[r] \ar[d] & SO(2k) \ar[d]\\
 U(1)\ar[r] \ar[d]^{=}&  U(2^k) \ar[r]  \ar[d]& PU(2^k) \ar[d]\\
 U(1)\ar[r] &  U(\cH) \ar[r] & PU(\cH).
  }
  \]
  This provides a  reduction of the principal $PU(\cH)$-bundle  $\cP_{o(E)}$. 
  The associated  bundle of Azumaya algebras 
  is in fact the  bundle of Clifford algebras, whose bundle modules are called Clifford modules (\cite{BGV}). 
 Hence, there
  exists a canonical isomorphism between $K^0(X, o(E))$ and the  $K$-theory of   the Clifford modules associated to the bundle  $\Cliff(E)$.
  \end{proof}

 \subsection{Twisted  $K$-theory: general properties}
 
Twisted  $K$-theory satisfies  the following  properties whose proofs are rather standard 
 for  a 2-periodic generalized cohomology theory (\cite{AS1} \cite{CW1} \cite{Kar} \cite{Wang}).  (Note that when we write $(X,A)$ for a pair of spaces we assume $A\subset X$.)
 \begin{enumerate}

  \item[(I)] ({\bf The homotopy axiom}) If two morphisms  $f, g: (Y, B)\to (X, A)$ are homotopic through
a map $\eta: (Y\times [0, 1] , B \times [0, 1]) \to (X, A)$, written  in terms of  the following homotopy
commutative diagram 
\[
\xymatrix{(Y, B) \ar[d]_{g} \ar[r]^{f} &
(X, A)
 \ar@2{-->}[dl]_{\eta} \ar[d]^{\a} \\
(X, A) \ar[r]_\a  & K(\ZZ, 3), } 
\]
then we have the following commutative diagram
\[
\xymatrix{
& K^{ev/odd}(X, A; \a)\ar[dr]^{g^*} \ar[dl]^{f^*} &\\
K^{ev/odd}(Y, B; \a\circ f)\ar[rr]^{\eta_*} && K^{ev/odd}(Y, B; \a\circ g).}
\]
Here $\eta_*$ is the canonical isomorphism induced
by the homotopy $\eta$.
\item[(II)]   ({\bf The exact  axiom}) For any pair $(X, A)$ with a twisting $\a: X\to K(\ZZ, 3)$, there exists the following
 six-term exact sequence
 \[\xymatrix{
 K^0 (X, A; \a) \ar[r]& K^0  (X, \a)\ar[r]& K^0 (A,\a |_A)
  \ar[d]\\
K^1 (A, \a |_A)  \ar[u] & K^1  ( X, \a) \ar[l]&
    K^1  (X, A; \a) \ar[l]
   }
 \]
here $\a |_A$ is the composition of the inclusion and $\a$.

  \item[(III)]  ({\bf The excision axiom})  Let $(X, A)$ be a pair of spaces and let 
$U \subset A$ be a subspace  such that  the closure $\overline{U} $ is contained in the interior of
 $A$. Then the inclusion $\i: (X-U, A-U) \to (X, A)$ induces, for all $\a: X\to K(\ZZ, 3)$,  an 
isomorphism 
\[
K^{ev/odd}  (X, A; \a)  \longrightarrow K^{ev/odd}  (X-U, A-U; \a\circ \i).
\]

  \item[(IV)]  ({\bf Multiplicative property}) Let $\a, \b: X\to K(\ZZ, 3)$ be a pair of twistings on $X$. Denote
 by $ \a+ \b$ the new twisting defined by the following map\footnote{In terms of bundles of projective  Hilbert space,  this operation corresponds to the 
Hilbert space tenrsor product, see \cite{AS1}.}  
\ba\label{product}
\xymatrix{
\a   +\b :  \quad   X\ar[r]^{(\a   , \b )  \ \  } & K(\ZZ, 3) \times K(\ZZ, 3) \ar[r]^{
\qquad m}   & 
K(\ZZ, 3), }
\na
where $m$ is defined as follows
\[
B PU(\cH) \times BPU(\cH) \cong  B( PU(\cH)  \times  PU(\cH) )    \longrightarrow B PU(\cH), 
\]
for a fixed  isomorphism $\cH \otimes \cH \cong \cH$. 
   Then there is a canonical
multiplication
\ba\label{multi:add}
K^{ev/odd} (X, \a)  \times K^{ev/odd} (X,   \b) \longrightarrow K^{ev/odd} (X, \a + \b ),
\na
which defines a $K^0(X)$-module structure  on   twisted K-groups $K^{ev/odd} (X, \a)$.

 \item[(V)]   ({\bf Thom isomorphism})  Let $\pi: E\to X$ be an oriented real vector bundle of rank $k$  over $X$, then there is a canonical isomorphism, for any twisting $\a: X\to K(\ZZ, 3)$,
 \ba\label{Thom:CW}
 K^{ev/odd}(X, \a +o_E )  \cong K^{ev/odd} (E, \a \circ  \pi),
 \na
with the grading shifted by $k (mod \ 2 )$. 

\nin 
 
\item[(VI)]   ({\bf The push-forward map}) 
  For any differentiable map $f: X\to Y$ between two smooth manifolds $X$ and $Y$,  let $\a: Y \to K(\ZZ, 3)$
  be a twisting. Then there is a canonical  push-forward  homomorphism 
  \ba\label{push:forward}
  f^K_!:   \qquad  K^{ev/odd} \bigl(X, ( \a \circ f)  +  o_{f}  \bigr) \longrightarrow K^{ev/odd}(Y, \a),
  \na
  with the grading shifted  by $n \ mod (2)$ for $n= \dim (X)+ \dim (Y)$. Here  $o_f$ is the  orientation twisting corresponding  to  the bundle $TX\oplus f^* TY$ over $X$.  
  
  \item[(VII)]  ({\bf  Mayer-Vietoris    sequence})
  If $X$ is covered by two  open subsets $U_1$ and $U_2$ with a twisting  $\a: X\to K(\ZZ, 3)$,  then
    there is a Mayer-Vietoris  exact sequence
 \[
 \xymatrix{
 K^0 (X, \a)\ar[r]& K^1  (U_1\cap U_2, \a_{12})\ar[r]& K^1 (U_1,\a_1 )\oplus K^1( U_2,\a_2 )
  \ar[d]\\
 K^0  (U_1, \a_1) \oplus K^0(U_2,\a_2 )\ar[u] & K^0  (U_1\cap U_2,\a_{12} ) \ar[l]&
   K^1 (X, \a) \ar[l]
   }
 \]
   where $\a_1$, $\a_2$ and $\a_{12}$ are the restrictions of $\a$ to
   $U_1$, $U_2$ and $U_1\cap U_2$ respectively.
\end{enumerate}

\section{Twisted $K$-homology}

Complex $K$-theory, as a generalized cohomology theory on a CW complex,  is developed by Atiyah-Hirzebruch using complex vector bundles. It is representable in the sense that
there exists a classifying space  $\ZZ\times BU(\infty) $, where $BU(\infty) =\vlim_{k}BU(k)$,
such that 
\[
K^0(X) = [X, \ZZ\times BU(\infty)]
\]
for any finite CW complex $X$.  The classifying space for complex  $K$-theory is 
referred to as the 
 $BU(\infty)$-spectrum with  even term $\ZZ\times BU(\infty)$  and odd term  $U(\infty)$. They are also called the `complex K-spectra' in the literature.  
The  advantage of using spectra is that there is a natural definition of a homology
theory associated to a  classifying space of each generalized cohomology theory. 
Hence,  the topological $K$-homology of a CW complex $X$, dual to complex $K$-theory,  is defined by the following stable homotopy groups
 \[
\hKt_{ev} (X)   =  \disp{\vlim_{k\to\infty}}  \pi_{2k} (BU(\infty)\wedge X^+)
\]
and 
  \[
 \hKt_{odd} (X, \a) =  \disp{\vlim_{k\to\infty}} \pi_{2k+1} (BU(\infty)\wedge X^+).
 \]
Here $X^+$ is the space $X$ with one point added as a based point, and   the wedge product 
of two based CW complexes  $(X, x_0)$ and $(Y, y_0)$
is defined to be
\[
X\wedge Y = \dfrac{X\times Y} {(X\times \{y_0\} \cup \{x_0\} \times Y)}.
\]

 All the properties of $K$-homology, as a generalized homology theory, can be obtained in a natural
 way see for example in \cite{Swi}. There are two other equivalent definitions of $K$-homology,   called analytic $K$-homology developed by Kasparov, and geometric $K$-homology
 by  Baum and Douglas. We now give a brief review of these two definitions.
 
Kasparov's  analytic $K$-homology    $KK^{ev/odd}(C(X), \CC)$ is 
generated by unitary equivalence classes of (graded)  Fredholm modules  over $C(X)$ modulo an operator homotopy
relation (\cite{Kas} and \cite{HigRoe}).   For brevity we will use the notation
$\hKa_{ev/odd}(X)$ for this $K$-homology.  A cycle for  $\hKa_{0}(X)$, also called a $\ZZ_2$-graded Fredholm module, consists of a triple 
$(\phi_0\oplus, \phi_1, \cH_0\oplus \cH_1, F)$, where \begin{itemize}
\item $\phi_i: C(X) \to B(\cH_i) $ is a representation 
of  $C(X)$  on a  separable Hilbert space  $\cH_i$;
\item $F: \cH_0 \to \cH_1$ is a  bounded
operator such that 
\[
\phi_1(a) F - F\phi_0(a), \qquad \phi_0(a) (F^*F -Id) \qquad \phi_1(a) (FF^* -Id)
\]
are compact operators for all $a\in C(X)$. 
\end{itemize}
A cycle for  $\hKa_1(X)$,  also called a trivially graded or odd Fredholm module,   consists of a pair
 $(\phi, F)$, where \begin{itemize}
\item $\phi: C(X) \to B(\cH) $ is a representation 
of  $C(X)$  on a separable Hilbert space   $\cH$;
\item $F$ is a bounded self-adjoint operator on $\cH$ such that
\[
\phi(a) F -F \phi(a),  \qquad \phi(a) (F^2-Id)
\]
are compact operators for all $a\in C(X)$. 
\end{itemize}

In \cite{BD1} and \cite{BD2}, Baum and Douglas gave a geometric definition of $K$-homology using what are now called geometric cycles.   The basic cycles for $ \hKg_{ev} (X) $ (respectively $\hKg_{odd}(X)$)  are triples 
\[
(M, \i, E)
\]
  consisting of even-dimensional (resp. odd-dimensional)  closed smooth manifolds $M$  with a
 given $Spin^c$ structure on the tangent bundle of $M$ together with a continuous map $\i: M\to X$
 and a complex vector bundle $E$ over $M$. The equivalence relation on the  set  of all cycles is generated by 
 the following three steps (see \cite{BD1} for details):
 \begin{enumerate}
\item[(i)] Bordism.
\item[(ii)] Direct sum and disjoint union.
\item[(iii)] Vector bundle modification.
\end{enumerate}
Addition in $\hKg_{ev/odd}(X)$ is given by the disjoint union operation of geometric cycles. 

Baum-Douglas in \cite{BD2} showed that the Atiyah-Singer index theorem is encoded in the following commutative diagram
 \ba\label{BD:origin}
  \xymatrix{ 
& \hKt_{ev/odd} (X )  \ar[dl]_{\cong} \ar[dr]^{\cong}
&\\
\hKg_{ev/odd}(X) \ar[rr]^{\mu} & & \hKa_{ev/odd} (X)}
\na
where $\mu$ is the assembly map assigning an abstract Dirac operator  
\[
\i_* ([\Dirac_M^E]) \in \hKa_{ev/odd} (X)
\]
  to a  geometric cycle $(M, \i, E)$.  
  
  For a  paracompact Hausdorff space $X$ with a twisting $\a: X\to K(\ZZ, 3)$, all these three
  versions of twisted $K$-homology were studied in \cite{Wang}. They are called
there the twisted  topological, analytic and geometric $K$-homologies, and denoted 
respectively by
  $\hKt_{ev/odd} (X, \a)$, $\hKa_{ev/odd} (X, \a)$ and $\hKg_{ev/odd} (X, \a)$.
Our first task in this Section is to review these three definitions, see \cite{Wang} for greater detail.

\subsection{ Topological and analytic definitions of twisted  $K$-homology}  
 
Let $X$ be a CW complex (or paracompact  Hausdorff space) with a twisting $\a: X\to K(\ZZ, 3)$.   Let $\cP_\a$ be the corresponding principal $K(\ZZ, 2)$-bundle.  Any
base-point preserving 
action of a $K(\ZZ, 2)$ on a  space  defines an associated 
bundle by the standard construction. In particular, 
as a classifying space of complex line bundles,
a $K(\ZZ, 2)$ acts on the complex  $K$-theory spectrum   $\KK$ representing
the tensor product by complex line bundles, where 
\[
\KK_{ev} = \ZZ\times BU(\infty), \qquad \KK_{odd} = U(\infty).
\] 
Denote by   $\cP_\a (\KK) = \cP_\a\times_{K(\ZZ, 2)} \KK$
the bundle of based  $K$-theory spectra over $X$.   There is a section of $\cP_\a (\KK) = \cP_\a\times_{K(\ZZ, 2)} \KK$ defined by taking the base points of each fiber. The image of this section can be identified with $X$ and we denote by $ \cP_\a ( \KK)/X$ the quotient space
of $  \cP_\a ( \KK)$ obtained by  collapsing the  image of this section. 
 
 The  stable homotopy groups of  $ \cP_\a ( \KK)/X$ by definition give the  topological  twisted $K$-homology groups $\hKt_{ev/odd} (X, \a)$. 
(There are only two due to Bott periodicity of $\KK$.) Thus we have
 \[
 \hKt_{ev} (X, \a) =  \disp{\vlim_{k\to\infty}} \pi_{2k} \bigl( \cP_\a ( BU(\infty)) /X\bigr)
 \]
 and 
  \[
 \hKt_{odd} (X, \a) =  \disp{\vlim_{k\to\infty}} \pi_{2k+1} \bigl( \cP_\a ( BU(\infty) ) /X\bigr).
 \]
 Here the direct limits are taken by the double suspension
 \[
 \pi_{n+2k} \bigl( \cP_\a ( BU(\infty)) /X\bigr) \longrightarrow  \pi_{n+2k+2} \bigl( \cP_\a (S^2 \wedge   BU(\infty)) /X \bigr)
 \]
 and then followed by the standard map 
 \[
 \xymatrix{
 \pi_{n+2k+2} \bigl(    \cP_\a (S^2 \wedge BU(\infty) )/X \bigr)  \ar[r]^{b\wedge  1} & \pi_{n+2k+2} \bigl(  \cP_\a (BU(\infty) \wedge BU(\infty))/X \bigr) \\
 \qquad  \qquad   \qquad   \ar[r]^m &  \pi_{n+2k+2} \bigl(  \cP_\a ( BU(\infty))/X\bigr) } 
 \]
 where $b: \RR^2\to BU(\infty)$ represents  the Bott generator  in $K^0(\RR^2)\cong \ZZ$, $m$ is the base point preserving
 map inducing the ring structure on  $K$-theory.

 For a relative CW-complex $(X, A)$ with  a twisting $\a: X\to K(\ZZ, 3)$, the relative version of topological twisted $K$-homology, denoted
$\hKt_{ev/odd}(X, A, \a)$, is defined to be  $\hKt_{ev/odd}(X/A, \a)$ where $X/A$ is
the quotient space of $X$ obtained by collapsing $A$ to a point.  Then we have the following
exact sequence
\[
\xymatrix{
\hKt_{odd} (X, A; \a) \ar[r]& \hKt_{ev} (A, \a|_A)\ar[r]& \hKt_{ev} (X,\a)
  \ar[d]\\
\hKt_{odd} (X, \a  )  \ar[u] & \hKt_{odd} ( A, \a|_A) \ar[l]&
    \hKt_{ev}  (X, A; \a) \ar[l],
   }
\]
and the excision properties
\[
\hKt_{ev/odd} (X, B; \a) \cong \hKt_{ev/odd} (A, A-B; \a|_A)
\]
for any CW-triad $(X; A, B)$ with a twisting $\a: X\to K(\ZZ, 3)$.  A triple  $(X; A, B)$ is A CW-triad
if $X$ is a CW-complex, and $A$, $B$ are two subcomplexes of $X$ such that $A\cup B = X$.

For the analytic twisted $K$-homology,  recall that $\cP_\a(\cK)$
is  the associated bundle of compact operators on $X$.  Analytic twisted $K$-homology, denoted by 
$\hKa_{ev/odd}(X,  \a)$, is defined to be
\[
\hKa_{ev/odd} (X,   \a) := KK^{ev/odd}\bigl  (C_0(X, \cP_\a(\cK)), \CC \bigr), 
\]
Kasparov's $\ZZ_2$-graded $K$-homology of
the   $C^*$-algebra $C_0(X, \cP_\a(\cK))$.

For a  relative CW-complex   $(X, A)$ with  a twisting $\a: X\to K(\ZZ, 3)$, the relative version of analytic  twisted $K$-homology  $\hKa_{ev/odd}(X, A, \a)$ is defined to be  $\hKa_{ev/odd}(X-A, \a)$.  Then we have the following
exact sequence
\[
\xymatrix{
\hKa_{odd} (X, A; \a) \ar[r]& \hKa_{ev} (A, \a|_A)\ar[r]& \hKa_{ev} (X,\a)
  \ar[d]\\
\hKa_{odd} (X, \a  )  \ar[u] & \hKa_{odd} ( A, \a|_A) \ar[l]&
    \hKa_{ev}  (X, A; \a) \ar[l],
   }
\]
and the excision properties
\[
\hKa_{ev/odd} (X, B; \a) \cong \hKa_{ev/odd} (A, A-B; \a|_A)
\]
for any CW-triad $(X; A, B)$ with a twisting $\a: X\to K(\ZZ, 3)$.

\begin{theorem}  \label{top=ana} (Theorem 5.1 in \cite{Wang}) There is a natural isomorphism   
\[
\Phi:  \hKt_{ev/odd} (X, \a)   \longrightarrow  \hKa_{ev/odd} (X, \a)
\]
 for any {\bf smooth}  manifold $X$ with a twisting $\a: X \to K(\ZZ, 3)$.
\end{theorem}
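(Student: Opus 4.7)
The plan is to construct the comparison map $\Phi$ at the level of cycles and then verify it is an isomorphism by a Mayer--Vietoris/five-lemma induction that reduces to the classical untwisted Baum--Douglas--Kasparov comparison theorem. The smoothness assumption on $X$ is used only to guarantee a good open cover whose members trivialise the twist.

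First I would define $\Phi$ as follows. After stabilisation, a class in $\hKt_{ev/odd}(X,\a)$ is represented by a pointed map $f: S^{n+2k} \to \cP_\a(\KK)/X$. Passing to the equivalent model $\cP_\a(\Fred)$ (which is valid after one suspension, using the classical identification of the Fredholm model for the $K$-theory spectrum), such a map packages into a continuous section of a fibrewise Fredholm family over $X$. Tensoring with the canonical $\cP_\a(\cK)$-module structure produces a Kasparov $(C_0(X,\cP_\a(\cK)),\CC)$-module, whose class in $KK^{ev/odd}(C_0(X,\cP_\a(\cK)),\CC) = \hKa_{ev/odd}(X,\a)$ is defined to be $\Phi([f])$. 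Well-definedness amounts to showing that a stable homotopy of sections yields an operator homotopy of Kasparov modules, which follows from the usual Fredholm perturbation arguments carried out fibrewise.

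Next I would verify that $\Phi$ is a natural transformation of twisted generalized homology theories: it commutes with pullback along morphisms $(Y,\b)\to(X,\a)$, with the six-term exact sequences of pairs, with excision, and with the Mayer--Vietoris boundary. Both $\hKt$ and $\hKa$ satisfy precisely the same axioms (as reviewed above and in \cite{Wang}), so naturality reduces to identifying the connecting maps, which in each theory arise from the same topological construction (a suspension/boundary of Fredholm deformations). Once naturality is in hand, I choose a good open cover $\{U_i\}$ of the smooth manifold $X$ by open sets diffeomorphic to Euclidean balls. On each $U_i$ the twisting $\a|_{U_i}$ is null-homotopic, so by the homotopy axiom both sides reduce to ordinary untwisted $K$-homology of a contractible space, where $\Phi$ becomes the classical isomorphism $\hKt_*(\mathrm{pt}) \cong \hKa_*(\mathrm{pt})$ of Kasparov (equivalently the BDF theorem). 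A Mayer--Vietoris plus five-lemma induction on the nerve of the cover propagates the isomorphism to all finite unions, and a direct limit handles the case when $X$ is non-compact.

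The main obstacle is the first step: constructing $\Phi$ in a way that is simultaneously well-defined on homotopy classes and compatible with the boundary maps of both theories. The fibrewise Fredholm family associated to a topological cycle must be glued across the trivialising patches of $\cP_\a$ into a global Hilbert module over $C_0(X,\cP_\a(\cK))$, and one must check that the $PU(\cH)$ transition cocycle is absorbed correctly into the Kasparov module structure. A secondary technical point is verifying that the suspension isomorphism used to pass from the $\KK$-spectrum model to the Fredholm model intertwines the Bott element on the topological side with the external Kasparov product by the Bott generator on the analytic side; this is essentially Atiyah's rotation trick but must be checked in the presence of the twist.
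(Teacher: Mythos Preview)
Your approach is valid in outline but takes a genuinely different route from the paper's. The paper does \emph{not} construct $\Phi$ directly and then run a Mayer--Vietoris/five-lemma induction. Instead it invokes Poincar\'e duality on both sides: topological Poincar\'e duality identifies $\hKt_{ev/odd}(X,\a)$ with $\cKt^{ev/odd}(X,\a+o_X)$, analytic Poincar\'e duality identifies $\hKa_{ev/odd}(X,\a)$ with $\cKa^{ev/odd}(X,\a+o_X)$, and then the already-established isomorphism $\cKt^{ev/odd}\cong\cKa^{ev/odd}$ of twisted $K$-\emph{cohomology} (Theorem~\ref{twited:K:top=ana}) closes the loop. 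In this argument smoothness is used essentially, because both Poincar\'e dualities are stated for smooth manifolds; your remark that smoothness enters only through the existence of a good trivialising cover is not how the paper uses it.

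The trade-offs are as follows. The paper's route avoids entirely the delicate step you flag as ``the main obstacle'': it never has to build a comparison map out of a stable homotopy class $S^{n+2k}\to\cP_\a(\KK)/X$ into a Kasparov $(C_0(X,\cP_\a(\cK)),\CC)$-module, nor check compatibility with boundary maps by hand. The price is that one must first prove two Poincar\'e duality theorems in the twisted setting. Your route, if the construction of $\Phi$ and its naturality with respect to the six-term sequences can be made rigorous, is more self-contained and in principle extends beyond smooth manifolds to CW pairs (indeed the paper remarks that such an extension is carried out in \cite{BW}). But the passage from ``section of a fibrewise Fredholm bundle'' to ``Kasparov module over $C_0(X,\cP_\a(\cK))$'' that you sketch is exactly the content that the Poincar\'e-duality argument absorbs into known results, and you should not underestimate how much needs to be checked there, particularly the compatibility of the Bott/suspension isomorphisms across the two models.
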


The proof of this theorem requires Poincar\'e duality between twisted  $K$-theory and twisted $K$-homology (we describe this duality in the next theorem), and the isomorphism (Theorem \ref{twited:K:top=ana}) between topological twisted  $K$-theory and analytic twisted  $K$-theory.

Fix an isomorphism $\cH \otimes \cH \cong \cH$
 which induces a group homomorphism
 $
 U(\cH) \times U(\cH) \longrightarrow U(\cH)
 $
 whose restriction to the center is the group multiplication on $U(1)$. So we have a 
 group homomorphism
 \[
 PU(\cH) \times PU(\cH) \longrightarrow PU(\cH)
 \]
 which defines  a continuous map, denoted $m_\ast$, of CW-complexes
 \[
 BPU(\cH) \times B PU(\cH)  \longrightarrow  BPU(\cH).
 \]
 As $BPU(\cH)$ is  identified as  $K(\ZZ, 3)$,  we may think of this as  a continuous map taking 
 $
 K(\ZZ, 3) \times K(\ZZ, 3)$ to $K(\ZZ, 3), 
 $
 which can be used to define $\a   + o_X$.

There are natural isomorphisms
from  twisted $K$-homology (topological resp.  analytic) to twisted  $K$-theory (topological resp.  analytic) of a smooth manifold $X$    where
the twisting  is shifted by 
\[
\a \mapsto \a   + o_X
\]
where $ \tau: X \to BSO $ is 
  the classifying map of  the stable tangent space and   $\a   + o_X$ denotes  the map $X \to  K(\ZZ, 3)$, representing the class $[\a]+ W_3(X)$ in $H^3(X, \ZZ)$. 

\begin{theorem}  \label{PD:twisted}  Let $X$ be a smooth manifold 
with a twisting $\a: X\to K(\ZZ, 3)$. 
There exist  isomorphisms  
\[
\hKt_{ev/odd} (X, \a) \cong    \cKt^{ev/odd} (X, \a  +o_X ) 
\]
  and  
\[
\hKa_{ev/odd} (X, \a) \cong    \cKa^{ev/odd} (X, \a  +o_X ) \]
  with the degree shifted by $\dim X (mod\ 2)$. 
\end{theorem}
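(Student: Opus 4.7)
The plan is to combine the twisted Thom isomorphism (Property V) with an embedding of $X$ into Euclidean space, in the spirit of Atiyah's realisation of $K$-homology via Thom spectra of normal bundles, and then transport the topological duality to the analytic setting via Theorem \ref{twited:K:top=ana}. I treat the compact case; the non-compact case follows by direct limits over compact submanifolds.

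Embed $X$ smoothly as a closed submanifold of $\RR^N$ for $N$ large, with normal bundle $\pi_\nu: \nu \to X$, so that $TX \oplus \nu$ is trivial and the twistings satisfy $o_X + o_\nu \simeq 0$ via a framing of this trivialisation. Property V applied to $\pi_\nu$ with twisting $\alpha$ on $X$ gives
\[
\cKt^{ev/odd}(X, \alpha + o_\nu) \;\cong\; \cKt^{ev/odd + \mathrm{rk}\,\nu}(\nu, \alpha \circ \pi_\nu),
\]
and since $W_3$ is $2$-torsion the left-hand side is canonically $\cKt^{ev/odd}(X, \alpha + o_X)$. The excision axiom (Property III) for the tubular neighbourhood identifies the right-hand side with the compactly supported twisted $K$-theory of $\RR^N$ relative to the complement of $X$. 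Collapsing to the Thom spectrum of $\nu$ and $N$-fold desuspending then realises this as $\hKt_{ev/odd + N + \mathrm{rk}\,\nu}(X, \alpha)$, now using the fibrewise interpretation of topological twisted $K$-homology as stable homotopy groups of $\cP_\a(\KK)/X$. Since $N + \mathrm{rk}\,\nu \equiv \dim X \pmod 2$, this yields the desired topological duality with the correct degree shift.

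The analytic statement then follows by applying Theorem \ref{twited:K:top=ana} to both sides: the natural isomorphisms $\hKt \cong \hKa$ and $\cKt \cong \cKa$ transport the topological isomorphism to the analytic one. Alternatively, one may realise the analytic duality intrinsically as Kasparov product with a fundamental class built from a family of twisted Dirac-type operators along $TX$ coupled to bundle gerbe module data for $\alpha + o_X$; the orientation twist compensates precisely for the $K$-orientation obstruction $W_3(TX)$ carried by $TX$.

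The main obstacle is the careful bookkeeping of twistings as actual maps to $K(\ZZ, 3)$ rather than as cohomology classes. The canonical null-homotopy $o_X + o_\nu \simeq 0$ depends on a framing, and different framings produce isomorphisms that differ by the action of a complex line bundle, as recorded in the discussion following \eqref{iso:eta}. Establishing that the resulting Poincar\'e duality map is genuinely canonical---in particular independent of the embedding $X \hookrightarrow \RR^N$ up to suspension---and that all the Thom, excision, and collapse maps commute at the level of spectra rather than merely at the level of homotopy groups, is the delicate technical point.
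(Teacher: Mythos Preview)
Your sketch for the topological duality is broadly in line with the argument in \cite{Wang}: embed $X$ in Euclidean space, apply the twisted Thom isomorphism to the normal bundle, and collapse to identify the result with the stable homotopy groups defining $\hKt$. The delicacy you flag---that the homotopy $o_X + o_\nu \simeq 0$ depends on a framing and must be tracked at the level of maps rather than cohomology classes---is real, and is handled in \cite{Wang} by working uniformly with the stable normal Gauss map.

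There is, however, a genuine circularity in your derivation of the analytic duality. You propose to transport the topological isomorphism via ``the natural isomorphisms $\hKt \cong \hKa$ and $\cKt \cong \cKa$''. The second of these is Theorem~\ref{twited:K:top=ana}, which is fine; but the first is Theorem~\ref{top=ana}, and in this paper Theorem~\ref{top=ana} is \emph{deduced from} Theorem~\ref{PD:twisted} (see the sentence immediately preceding the statement of Theorem~\ref{PD:twisted}). So you cannot invoke $\hKt \cong \hKa$ to prove the analytic half of Theorem~\ref{PD:twisted} without assuming what you are trying to establish. The paper avoids this by treating the two dualities as logically independent results: topological Poincar\'e duality comes from \cite{Wang}, while analytic Poincar\'e duality is proved directly in \cite{EEK} and \cite{Tu} via Kasparov theory, using the continuous-trace algebra $C_0(X,\cP_\a(\cK))$ and a fundamental class constructed from the Clifford-algebra bundle of $TX$. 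Your ``alternatively'' clause gestures at exactly this construction, and that is the route you must actually take; the transport argument is not available to you here.
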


 Analytic Poincar\'e duality was established in \cite{EEK} and \cite{Tu}, and
 topological Poincar\'e duality was established in \cite{Wang}.
Theorem \ref{top=ana} and the exact sequences for a pair $(X, A)$ imply the following 
corollary.

\begin{corollary}   There is a natural isomorphism   
\[
\Phi:  \hKt_{ev/odd} (X, A,  \a)   \longrightarrow  \hKa_{ev/odd} (X, A,  \a)
\]
 for any {\bf smooth}  manifold $X$ with a twisting $\a: X \to K(\ZZ, 3)$ and a closed submanifold 
 $A\subset X$.
\end{corollary}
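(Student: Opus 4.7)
The plan is to prove the corollary via a Five Lemma argument applied to the ladder formed by the two six-term exact sequences for the pair $(X,A)$ in topological and analytic twisted $K$-homology. Since $A\subset X$ is a closed submanifold of a smooth manifold, both $A$ and $X$ are themselves smooth manifolds, and Theorem~\ref{top=ana} therefore supplies canonical isomorphisms $\Phi_X : \hKt_{ev/odd}(X,\a) \stackrel{\cong}{\longrightarrow} \hKa_{ev/odd}(X,\a)$ and $\Phi_A : \hKt_{ev/odd}(A,\a|_A) \stackrel{\cong}{\longrightarrow} \hKa_{ev/odd}(A,\a|_A)$. These supply four of the five parallel arrows required in the ladder.

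Next, I would verify that $\Phi$ is natural under the closed inclusion $\iota:A\hookrightarrow X$. Unwinding the construction of $\Phi$ in Theorem~\ref{top=ana} --- which passes through twisted Poincar\'e duality (Theorem~\ref{PD:twisted}) and the topological-analytic comparison of twisted $K$-theory (Theorem~\ref{twited:K:top=ana}) --- each ingredient is natural under smooth maps, so the naturality square for $\iota$ commutes. Combined with naturality of the connecting homomorphisms in the two six-term sequences, this allows one to define the relative comparison map
\[
\Phi_{(X,A)} : \hKt_{ev/odd}(X,A,\a) \longrightarrow \hKa_{ev/odd}(X,A,\a)
\]
either by a standard diagram chase or, more concretely, by choosing a tubular neighborhood $U$ of $A$ and using the excision properties recalled in Section~3 to identify each relative group with a $K$-homology group of the smooth open manifold $X\setminus A$, where Theorem~\ref{top=ana} applies directly.

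With the ladder commutative and $\Phi_{(X,A)}$ in place, the Five Lemma then yields immediately that $\Phi_{(X,A)}$ is an isomorphism, and naturality in $(X,A,\a)$ follows from naturality of all the ingredients. The main obstacle is not the Five Lemma itself but the reconciliation of the two models for the relative groups: the topological side is defined via the quotient $X/A$ (which need not be a smooth manifold), while the analytic side uses the open complement $X\setminus A$. It is at this step that the hypothesis that $A$ is a closed \emph{submanifold}, rather than an arbitrary closed subset, is essential --- it ensures the existence of a tubular neighborhood and hence the homotopy-theoretic identifications needed to reduce the relative situation, via excision, to the smooth open manifold case already handled by Theorem~\ref{top=ana}.
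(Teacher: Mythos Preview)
Your proposal is correct and follows essentially the same approach as the paper, which simply states that the corollary follows from Theorem~\ref{top=ana} together with the six-term exact sequences for the pair $(X,A)$. Your write-up is in fact more careful than the paper's one-line justification: you make explicit the need to construct the relative comparison map and reconcile the two models for the relative groups (via $X/A$ on the topological side versus $X\setminus A$ on the analytic side), which the paper leaves implicit.
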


\begin{remark} In fact,  Poincar\'e duality  as in Theorem  \ref{PD:twisted} holds for any compact Riemannian
manifold $W$ with boundary $\partial W$ and a twisting $\a:W\to K(\ZZ, 3)$.
This duality takes the following form 
\[
\hKt_{ev/odd} (W, \a) \cong    \cKt^{ev/odd} (W, \partial W, \a  +o_W ) 
\]
  and  
\[
\hKa_{ev/odd} (W, \a) \cong    \cKa^{ev/odd} (X, \partial X, \a  +o_W ) \]
  with the degree shifted by $\dim W (mod\ 2)$. 
From this, we have a natural isomorphism  (\cite{BW})
\[
\Phi:  \hKt_{ev/odd} (X, A,  \a)   \longrightarrow  \hKa_{ev/odd} (X, A,  \a)
\]
 for any  CW pair $(X, A)$ with a twisting $\a: X \to K(\ZZ, 3)$ using the Five Lemma. \end{remark}

\subsection{Geometric cycles  and geometric twisted  $K$-homology}

  Let $X$ be a paracompact Hausdorff space  and let  $\a:  X \longrightarrow  K(\ZZ, 3)$ be
a twisting over $X$.

\begin{definition} Given a smooth oriented  manifold $M$ with a classifying map 
$\nu$ of its  stable normal bundle then we say that  $M$ is an $\a$-twisted $Spin^c$ manifold over $X$   if  
$M$ is equipped with an  $\a$-twisted $Spin^c$ structure, that means,    a continuous map $\i: M\to X$ such that  the following diagram 
\[
\xymatrix{M \ar[d]_{\i} \ar[r]^{\nu} & 
\BSO
 \ar@2{-->}[dl]_{\eta} \ar[d]^{W_3} \\
X \ar[r]_\a  & K(\ZZ, 3), } 
\]
commutes up to a fixed  homotopy $\eta$  from $W_3\circ \nu$ and $\a \circ \i$.  Such an  
 $\a$-twisted $Spin^c$ manifold over $X$ will be denoted by $(M, \nu, \i, \eta)$. \end{definition}

\begin{proposition} $M$ admits an $\a$-twisted $Spin^c$ structure
if and only if  there is a continuous map $\i: M\to X$ such that 
\[
\i^*([\a]) +  W_3(M)=0.
\]
 If $\i$ is an embedding, this is the  anomaly cancellation
condition obtained by Freed and Witten in \cite{FreWit}. 
\end{proposition}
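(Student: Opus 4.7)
The plan is to reduce the proposition to the fact that $K(\ZZ,3)$ represents $H^3(-,\ZZ)$: two maps $M\to K(\ZZ,3)$ are homotopic if and only if they induce equal classes in $H^3(M,\ZZ)$. Under this dictionary, the homotopy $\eta$ in the definition of an $\a$-twisted $Spin^c$ structure becomes a cohomological equation, which should be the one stated.

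First I would identify the classes represented by the two maps into $K(\ZZ,3)$. The composite $\a\circ\i$ represents $\i^*[\a]\in H^3(M,\ZZ)$, by definition of $[\a]$. The composite $W_3\circ \nu : M\to \BSO\to K(\ZZ,3)$ represents $W_3(\nu_M)$, the integral third Stiefel-Whitney class of the stable normal bundle of $M$. The only bundle-theoretic input needed is the identity $W_3(\nu_M)=W_3(M)$: since $TM\oplus \nu_M$ is stably trivial, the Whitney sum formula gives $w(TM)\cdot w(\nu_M)=1$, and orientability of $M$ (which is forced anyway by the existence of either a $Spin^c$ or $\a$-twisted $Spin^c$ structure) makes $w_1=0$, so $w_2(TM)=w_2(\nu_M)$ in $H^2(M,\ZZ/2)$; applying the Bockstein $\beta$ to both sides yields the claim.

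With this in hand, the forward direction is immediate: an $\a$-twisted $Spin^c$ structure supplies a homotopy $\eta$ between $W_3\circ\nu$ and $\a\circ\i$, which forces $W_3(M)=\i^*[\a]$ in $H^3(M,\ZZ)$; since $W_3$ lies in the image of $\beta$ and therefore satisfies $2W_3=0$, this is equivalent to $\i^*[\a]+W_3(M)=0$. For the reverse direction, given $\i:M\to X$ with $\i^*[\a]+W_3(M)=0$, the same 2-torsion remark shows $\a\circ\i$ and $W_3\circ\nu$ represent equal classes in $H^3(M,\ZZ)$; hence they are homotopic as maps to $K(\ZZ,3)$, and any choice of homotopy $\eta$ between them assembles the data $(M,\nu,\i,\eta)$ into an $\a$-twisted $Spin^c$ structure.

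The only substantive point is the identity $W_3(\nu_M)=W_3(M)$; the rest is a formal consequence of the representability of $H^3(-,\ZZ)$ by $K(\ZZ,3)$. The one mild subtlety worth flagging is that the proposition records the condition as a sum rather than an equality, which is harmless because of the 2-torsion of $W_3$ but is the reason the sign never appears explicitly.
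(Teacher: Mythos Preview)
Your proof is correct and is exactly the argument the paper has in mind; the paper's own proof consists of the single sentence ``This is clear.'' You have simply made explicit what the authors regard as evident: that the homotopy $\eta$ exists if and only if $W_3\circ\nu$ and $\a\circ\i$ represent the same class in $H^3(M,\ZZ)$, together with the normal-versus-tangent identity $W_3(\nu_M)=W_3(M)$ and the $2$-torsion of $W_3$ that converts equality into the stated additive condition. One minor remark: orientability of $M$ is already built into the definition of an $\a$-twisted $Spin^c$ manifold, so you need not argue it is ``forced''; it is simply part of the standing hypotheses.
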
 
  
  \begin{proof} This is clear. \end{proof}
  
A morphism between   $\a$-twisted $Spin^c$  manifolds
$(M_1, \nu_1, \i_1, \eta_1)$ and $ (M_2, \nu_2, \i_2, \eta_2)$  is
a continuous map $f: M_1 \to M_2$ where  the following diagram 
 \ba\label{morphism}
   \xymatrix{
    M_1  \ar@/{}_{1pc}/[ddr]_{\i_1} \ar@/{}^{1pc}/[drr]^{\nu_1}
       \ar[dr]^{f }            \\
      & M_2 \ar[d]_{\i_2} \ar[r]^{\nu_2}   
                     & \BSO \ar@2{-->}[dl]_{\eta_2}\ar[d]^{W_3}       \\
      & X \ar[r]_\a   & K(\ZZ, 3)              }
   \na
 is a homotopy commutative diagram such that
 \begin{enumerate}
\item   $\nu_1$ is homotopic to $\nu_2 \circ f$ through a continuous  map $\nu: M_1 \times [0, 1] \to \BSO$;
\item  $\i_2 \circ f$ is homotopic to $\i_1$   through continuous   map $\i : M_1 \times [0, 1] \to X$;
\item   the composition of homotopies  $( \a \circ \i ) * (\eta_2 \circ (f\times Id) ) * (W_3 \circ \nu)$
 is homotopic to $\eta_1$.
   \end{enumerate} 
     Two  $\a$-twisted $Spin^c$  manifolds
$(M_1, \nu_1, \i_1, \eta_1)$ and $ (M_2, \nu_2, \i_2, \eta_2)$  are called isomorphic
if  there exists a diffeomorphism $f: M_1 \to M_2$ such that  the above holds. If the identity
map on $M$ induces an isomorphism between $(M, \nu_1, \i_1, \eta_1)$ and $ (M, \nu_2, \i_2, \eta_2)$, then these two $\a$-twisted $Spin^c$ structures are called equivalent.

Orientation reversal in the Grassmannian model defines an involution 
\[
r: \BSO \longrightarrow \BSO.
\]
 Choose a good cover $\{V_i\}$ of $M$ and hence a trivialisation of the universal bundle over $\BSO(n)$ with transition functions 
\[
g_{ij}: V_i\cap V_j \longrightarrow SO(n).
\]
Let  $\tilde{g}_{ij}: V_i\cap V_j \longrightarrow Spin^c(n)$ be   a lifting of $g_{ij}$.
 Then  $\{c_{ijk}\}$, obtained from
\[
\tilde{g}_{ij} \tilde{g}_{jk} = c_{ijk}\tilde{g}_{ik},
\]
defines $[W_3] \in H^3(\BSO, \ZZ)$.  Let $h$ be the diagonal matrix with the first $(n-1)$ diagonal entries  $1$ and the last entry $-1$. Then $\{h g_{ij} h^{-1}\}$ are the transition functions 
for the universal  bundle over $\BSO(n)$ with the opposite orientation.  Note that  $\{h \tilde{g}_{ij} h^{-1}\}$  is a lifting of $\{h g_{ij} h^{-1}\}$, which leaves $\{c_{ijk}\}$ unchanged.  We have 
$[W_3] =  [W_3\circ r] \in H^3(\BSO, \ZZ)$. 
Hence there is a   homotopy connecting $W_3$ and $W_3\circ r$.  (It is unique up to homotopy as $H^2(\BSO, \ZZ)=0$).  Given an $\a$-twisted $Spin^c$ manifold   $(M, \nu, \i, \eta)$, let $-M$ be the same manifold with the orientation reversed.  Then the homotopy
commutative diagram
\[
\xymatrix{M \ar[d]_{\i} \ar[r]^{\nu} & 
\BSO \ar[r]^r \ar[d]^{W_3}
 \ar@2{-->}[dl]_{\eta}& \BSO  \ar@2{-->}[dl] \ar@/{}^{1.7pc}/[dl]^{W_3} \\
X \ar[r]_\a  & K(\ZZ, 3)  &   } 
\]
determines a unique equivalence class of $\a$-twisted $Spin^c$ structure on $-M$, called the 
{\bf opposite}  $\a$-twisted $Spin^c$ structure,  simply denoted  by $-(M, \nu, \i, \eta)$.

\begin{definition}
 A   geometric cycle for $(X, \a)$  is 
a quintuple $(M, \i, \nu, \eta, [E])$ where $[E]$ is a K-class in $K^0(M)$ and 
 $M$ is a smooth closed manifold  equipped with an $\a$-twisted $Spin^c$ structure
 $(M, \i, \nu, \eta)$. 
  
Two geometric cycles $(M_1, \i_1, \nu_1, \eta_1, [E_1])$ and $ (M_2, \i,_2 \nu_2, \eta_2, [E_2])$
are isomorphic 
if there is an isomorphism $f:  (M_1, \i_1, \nu_1, \eta_1) \to  (M_2, \i_2,  \nu_2, \eta_2)$,
as $\a$-twisted $Spin^c$ manifolds over $X$,  such that $f_! ([E_1]) = [E_2]$.
\end{definition}

Let $\Gamma (X, \a)$ be the collection of all geometric cycles for $(X,  \a)$. We now impose an equivalence relation $\sim$ on $\Gamma (X, \a)$, generated by the following three elementary 
relations:
\begin{enumerate}
\item  {\bf Direct sum -  disjoint union}

\nin If  $(M , \i , \nu , \eta , [E_1])$ and $ (M , \i,  \nu , \eta , [E_2])$ 
are two geometric cycles with the same $\a$-twisted $Spin^c$ structure,
then 
\[
(M , \i , \nu , \eta , [E_1]) \cup  ( M , \i , \nu , \eta , [E_2]) \sim (M , \i , \nu , \eta , [E_1]+ [E_2]).
\]
\item  {\bf Bordism}

\nin Given  two geometric cycles $(M_1, \i_1, \nu_1, \eta_1, [E_1])$ and $ (M_2, \i_2, \nu_2, \eta_2, [E_2])$, if
there exists a $\a$-twisted $Spin^c$ manifold  $(W, \i, \nu, \eta)$ and $[E]\in K^0(W)$  such that 
\[
\pa (W, \i, \nu, \eta) =  
-(M_1, \i_1, \nu_1, \eta_1) \cup   (M_2, \i_2,  \nu_2, \eta_2)
\]
and $\pa ([E]) = [E_1] \cup [E_2]$. Here $-(M_1, \i_1, \nu_1, \eta_1)$
denotes  the manifold  $M_1$  with the  opposite $\a$-twisted $Spin^c$ structure.

\item   {\bf $Spin^c$ vector bundle modification}

\nin
 Suppose we are given a geometric cycle
 $(M, \i, \nu, \eta, [E]) $ and a  $Spin^c$ vector bundle $V$  over $M$ with 
  even dimensional fibers.  Denote by $\underline{\RR}$ the trivial rank one real 
  vector bundle. Choose a Riemannian metric on $V\oplus \underline{\RR}$, let
  $$\hat{M}= S(V\oplus \underline{\RR})$$  be the sphere bundle of $V\oplus \underline{\RR}$. Then
   the vertical tangent bundle $T^v(\hat{M})$ of $ \hat{M}$ admits a natural $Spin^c$ structure
   with an associated $\ZZ_2$-graded spinor bundle  $S^+_V\oplus S^-_V$ . Denote by
  $\rho: \hat{M} \to M$   the projection  which is   K-oriented.  Then
  \[
  (M, \i, \nu, \eta, [E]) \sim (\hat{M}, \i\circ \rho , \nu \circ \rho, \eta \circ \rho, [\rho^*E\otimes S^+_V]).
  \]
\end{enumerate}

\begin{definition} \label{twisted:geo} Denote by  $\hKg_*(X, \a) = \Gamma (X, \a)/\sim$ the 
geometric twisted $K$-homology. Addition is given by disjoint union - 
direct sum relation. Note that the equivalence relation $\sim$ preserves the parity
of the dimension of the underlying $\a$-twisted $Spin^c$ manifold. Let 
$\hKg_{0}(X, \a) $ (resp. $\hKg_1(X, \a)$  ) the subgroup of $ \hKg_*(X, \a)$
determined by all geometric cycles with even (resp. odd) dimensional
$\a$-twisted $Spin^c$ manifolds. 
\end{definition}

\begin{remark} \begin{enumerate}
 \item If  $M$, in  a geometric cycle $(M, \i, \nu, \eta, [E])$  for $(X, \a)$,  is a compact manifold with boundary, then $[E]$ has to be a class in $K^0(M, \pa M)$. 
 \item If $f: X\to Y$ is a continuous map and $\a: Y\to K(\ZZ, 3)$ is a twisting, then there is a natural
homomorphism  of abelian groups
\[
f_*:  \hKg_{ev/odd}(X, \a \circ f ) \longrightarrow \hKg_{ev/odd}(Y, \a) 
\]
sending $[M, \i, \nu, \eta,  E ]$ to $[M,f \circ  \i ,  \nu, \eta,  E]$.   
\item Let $A$ be a closed subspace of $X$, and $\a$ be a twisting on $X$.   A relative   geometric cycle for $(X, A;  \a)$  is  a 
quintuple   $(M, \i, \nu, \eta, [E])$    such that
\begin{enumerate}
\item $M$ is a smooth  manifold  (possibly with boundary), equipped  with an $\a$-twisted $Spin^c$ structure $(M, \i, \nu, \eta)$;
\item if $M$ has a non-empty boundary, then  $\i (\pa M) \subset  A$;
\item $[E]$ is a K-class in $K^0(M)$ represented by a $\ZZ_2$-graded vector bundle $E$ over $M$, or
a continuous map  $M \to BU(\infty)$. 
\end{enumerate}
\end{enumerate}
\end{remark}
The  relation $\sim$ generated by disjoint union - direct sum,  bordism
and $Spin^c$ vector bundle
modification is an equivalence relation.  The collection of relative geometric cycles, modulo
the equivalence relation   is denoted by
\[\hKg_{ev/odd}(X, A; \a ).
\]

 There exists  a natural  homomorphism, called the assembly map
$$\mu: \hKg_{ev/odd}(X, \a) \to \hKa_{ev/odd} (X, \a)$$
whose definition (which we will now explain) requires a careful study of geometric cycles.

Given a geometric cycle $(M, \i, \nu, \eta, [E])$,  equip $M$ with a Riemannian metric.   Denote by  $\CL (TM)$ the bundle of complex Clifford algebras of $TM$ over $M$.  The algebra of sections,   $C(M, \CL (TM)) $,    is Morita equivalent to 
 $C(M, \tau^* \BSpin^c (\cK) )$. Hence, we have   a canonical isomorphism
 \[ 
 \hKa_{ev/odd} (M, W_3\circ \tau ) \cong  KK^{ev/odd} ( C(M, \CL (M) ), \CC) 
 \]
 with the degree shift by $\dim M  (mod \ 2)$. 
Applying  Kasparov's Poincar\'{e} duality  (Cf. \cite{Kas1}) 
\[
 KK^{ev/odd} (\CC, C(M)) \cong KK^{ev/odd} (C(M, \CL (M) ), \CC), 
 \]
we obtain  a canonical isomorphism 
 \[
PD:  K^0(M)  \cong \hKa_{ev/odd} (M,  o_M ), 
\]
with the degree shift by $\dim M  (mod \ 2)$. 
 The fundamental class   $[M] \in \hKa_{ev/odd} (M, o_M )$ is 
the Poincar\'{e} dual of the unit element in $K^0(M)$. Note that
 $[M]\in \hKa_{ev}(M, o_M))$ if $M$ is even dimensional and $[M]\in \hKa_{odd}(M, o_M)$ if $M$ is odd dimensional. The cap product 
 \[\cap: 
 \hKa_{ev/odd} (M,  o_M ) \otimes K^0(M)  \longrightarrow \hKa_{ev/odd} (M, o_M )  
 \]
 is  defined by the Kasparov product.  We remark that Poincar\'e duality   is given by  the cap product of the fundamental $K$-homology class $[M]$ 
 \[
 [M] \cap: K^0(M)  \cong \hKa_{ev/odd} (M,  o_M ). 
 \]
 
Choose  an embedding $i_k: M \to \RR^{n+k}$ and take the resulting normal bundle $\nu_M$. The natural isomorphism
   \[
   TM \oplus \nu_M \oplus \nu_M \cong \underline{\RR}^{n+k} \oplus \nu_M
   \]
   and the canonical $Spin^c$ structure on $\nu_M \oplus \nu_M$ define a canonical homotopy between 
 the orientation twisting  $o_M$  of $TM$ and the orientation twisting $o_{\nu_M}$ of $\nu_M$.  This  canonical homotopy defines an isomorphism
 \ba\label{1}
I_*: \hKa_{ev/odd} (M, o_M ) \cong \hKa_{ev/odd}(M,     o_{\nu_M} ) .
\na

  Given an    $\a$-twisted $Spin^c$  manifold $(M, \nu,  \i, \eta)$   over $X$,  the homotopy $\eta$ induces an isomorphism
$\nu^* \BSpin^c   \cong   \i^* \cP_\a $ as principal $K(\ZZ, 2)$-bundles  on $M$. Hence there is   an isomorphism 
\[\xymatrix{
   \nu^* \BSpin^c (\cK)   \ar[rr]^{  \eta^*}_{ \cong }&&  \i^* \cP_\a  (\cK) }
\]
as bundles of $C^*$-algebras  on  $M$.    This isomorphism
determines a canonical isomorphism 
between the corresponding continuous trace $C^*$-algebras 
\[
C(M, \nu^* \BSpin^c (\cK))  \cong  C(M,  \i^* \cP_\a(\cK) ).
\]
 Hence,  we have a canonical isomorphism
\ba\label{2}
\eta_*:   \hKa_{ev/odd} (M, o_{\nu_M}  )  \cong   \hKa_{ev/odd} (M,      \a \circ \i  ). 
\na

Now we can define the assembly map as 
\[ 
\mu  (M, \i, \nu, \eta, [E]) =   \i_*\circ \eta_* \circ I_* ([M]\cap [E]) 
\]
in $\hKa_{ev/odd} (X, \a)$. Here $\i_*$ is  the natural push-forward map in analytic twisted  $K$-homology. 

\begin{theorem} \label{geo:ana}  (Theorem 6.4 in \cite{Wang}) The 
 assembly map $\mu: \hKg_{ev/odd}(X, \a) \to \hKa_{ev/odd} (X, \a)$
  is an isomorphism for any {\bf smooth}  manifold $X$ with a twisting $\a: X \to K(\ZZ, 3)$.
\end{theorem}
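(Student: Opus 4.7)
The plan is to follow the strategy of Baum--Higson--Schick in the untwisted case, adapted to handle the twisting $\a$. First I would verify that $\mu$ descends to the equivalence classes defining $\hKg_{ev/odd}(X,\a)$. Invariance under direct sum--disjoint union is immediate from the bilinearity of the cap product in Kasparov's $KK$-theory. Invariance under bordism reduces, via $[\partial W] = 0$ in $\hKa_{ev/odd}(W, o_W)$ for a compact $\a$-twisted $Spin^c$ bordism $W$, to the fact that $\i_* \circ \eta_* \circ I_*$ annihilates the boundary cycle. Invariance under $Spin^c$ vector-bundle modification is the crucial geometric input: it amounts to the twisted Thom isomorphism (property (V) in Section~2) applied to the rank-even $Spin^c$ bundle $V \to M$, using the fact that $[\hat{M}] \cap [\rho^* E \otimes S_V^+] = \rho^!([M]\cap [E])$ under the Gysin map $\rho^!$ associated with $\rho: \hat M \to M$. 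This last identity is classical for untwisted $K$-homology and passes through the isomorphisms $I_*$ and $\eta_*$ since both are induced by canonical homotopies of $Spin^c$ structures.

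Next I would extend $\mu$ to relative geometric cycles and check that it is a natural transformation of $\ZZ_2$-graded functors on the category of pairs $(X, \a)$ with twisting-preserving maps; naturality under pullback $f: (X, \a \circ f) \to (Y, \a)$ is straightforward at the level of cycles because composition with $f$ preserves all three relations and commutes with the analytic push-forward.

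The heart of the argument is a Mayer--Vietoris reduction. Both sides satisfy the six-term Mayer--Vietoris sequence: for $\hKa_{ev/odd}$ this is standard (via the exact axiom (II) after Poincar\'e duality, or directly from Kasparov theory), and for $\hKg_{ev/odd}$ it is established by a transversality/cutting argument of geometric cycles along the boundary of an overlap, cf.\ the construction in \cite{BD2} (here the twisting does not cause new difficulties since it extends to all of $X$). I would then choose a locally finite cover of the smooth manifold $X$ by open sets $U_i$ on each of which $\a|_{U_i}$ is null-homotopic; this is possible because $X$ has the homotopy type of a CW complex and $K(\ZZ,3)$ has trivial $\pi_0$ and $\pi_1$. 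On such a $U_i$, an equivalence class of null-homotopy identifies $\hKg_{ev/odd}(U_i, \a|_{U_i}) \cong \hKg_{ev/odd}(U_i)$ and likewise for $\hKa$, and under these identifications $\mu$ becomes the classical Baum--Douglas assembly map, which is an isomorphism by the diagram~(\ref{BD:origin}). A five-lemma induction over the nerve of the cover, combined with the compatibility of $\mu$ with Mayer--Vietoris, then gives the isomorphism on all of $X$.

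The main obstacle I anticipate is the compatibility of $\mu$ with the Mayer--Vietoris connecting homomorphisms. On the geometric side the connecting map is defined by cutting an $\a$-twisted $Spin^c$ cycle $(M, \i, \nu, \eta, [E])$ along the preimage of $\partial \overline{U_1\cap U_2}$, which requires first making $\i$ smooth and transverse (this is legitimate thanks to the bordism relation) and then equipping the resulting cut manifold with the induced twisted $Spin^c$ structure. Verifying that this geometric boundary agrees, under the isomorphism $I_* \circ \eta_*$, with the analytic boundary coming from the short exact sequence of $C^*$-algebras associated with the cover is essentially a local calculation; because locally the twisting is trivial, it reduces to the untwisted compatibility, but bookkeeping the homotopies $\eta$ and the stabilisations $I_*$ carefully is where the work lies.
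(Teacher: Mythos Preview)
Your approach is genuinely different from the paper's. The paper does \emph{not} run a Mayer--Vietoris bootstrap. Instead, it works through the third model, topological twisted $K$-homology $\hKt_{ev/odd}(X,\a)$: the isomorphism $\Phi: \hKt_{ev/odd}(X,\a)\to \hKa_{ev/odd}(X,\a)$ is already known (Theorem~\ref{top=ana}, proved via twisted Poincar\'e duality on both sides and the identification of topological with analytic twisted $K$-theory). The paper then constructs a natural map $\Psi: \hKt_{ev/odd}(X,\a)\to \hKg_{ev/odd}(X,\a)$, essentially by a Pontrjagin--Thom type argument turning a stable homotopy class $S^{n+2k}\to \cP_\a(BU(\infty))/X$ into an $\a$-twisted $Spin^c$ cycle, and verifies that $\mu\circ\Psi=\Phi$. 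From $\Phi$ being an isomorphism one gets $\mu$ surjective and $\Psi$ injective; surjectivity of $\Psi$ (every geometric cycle arises this way up to the three relations) then forces $\mu$ to be an isomorphism. So the paper's argument is global, leaning on Poincar\'e duality rather than on locality of the twisting.

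Your route trades this for a local-to-global argument. What you gain is that you never touch $\hKt$ or Poincar\'e duality; what you pay for is having to establish Mayer--Vietoris for $\hKg_{ev/odd}(-,\a)$ directly from geometric cycles, and to check compatibility of $\mu$ with the connecting maps---which, as you correctly identify, is where all the work hides. Two places where your outline is thin: first, your base case invokes the untwisted isomorphism on open sets $U_i$, but diagram~(\ref{BD:origin}) as stated is for the classical result on compact spaces or finite CW complexes, so you would still need either a direct-limit argument over compact exhaustions of $U_i$ or to run the bootstrap all the way down to contractible charts; second, for a non-compact $X$ the ``induction over the nerve'' of an infinite cover is not a finite five-lemma argument and requires a colimit compatibility statement that you have not supplied. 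Neither is fatal, but both need to be addressed for the argument to close.
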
 

The proof follows by establishing the existence of   a natural map  $\Psi: \hKt_{ev} (X , \a ) \to \hKg_0(X, \a)$ such that 
   the following  diagram 
  \[
  \xymatrix{ 
& \hKt_{ev/odd} (X , \a )  \ar[dl]_{\Psi} \ar[dr]^{\Phi}_{\cong}
&\\
\hKg_{ev/odd}(X, \a ) \ar[rr]^{\mu} & & \hKa_{ev/odd} (X, \a)}
\]
commutes.  All the maps in the diagram are isomorphisms.  

\begin{remark} This theorem is generalised in \cite{BW} to the case of
 any CW pair $(X, A)$. That is, it is shown that the equivalence between 
the geometric twisted $K$-homology and the analytic twisted  $K$-theory holds in this
more general situation.
\end{remark}

\begin{corollary} $ \hKa_{ev/odd} (X, \a) \cong  \hKa_{ev/odd} (X, - \a)$.
\end{corollary}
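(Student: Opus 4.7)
The plan is to deduce the isomorphism from complex conjugation of the continuous-trace $C^*$-algebra. The key observation is that the twisting $-\a$ corresponds to the complex conjugate of the algebra associated to $\a$. Concretely, complex conjugation on $U(\cH)$, viewed as an involution of real Lie groups, descends to $PU(\cH)$ and induces multiplication by $-1$ on $\pi_3(BPU(\cH)) = \ZZ$; therefore the principal $PU(\cH)$-bundle $\cP_{-\a}$ is naturally isomorphic to the complex conjugate bundle $\overline{\cP_\a}$, and the associated bundle of compact operators satisfies $\cP_{-\a}(\cK) \cong \overline{\cP_\a(\cK)}$ as a bundle of $C^*$-algebras. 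Passing to sections vanishing at infinity yields a canonical isomorphism
\[
C_0(X, \cP_{-\a}(\cK)) \cong \overline{C_0(X, \cP_\a(\cK))}.
\]

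With this identification in hand, I would invoke the naturality of Kasparov's $KK$-theory under complex conjugation: given a Fredholm module $(\phi, \cH, F)$ representing a class in $KK^{ev/odd}(A, \CC)$ (with the obvious $\ZZ_2$-graded version in the even case), the conjugate triple $(\bar\phi, \bar\cH, \bar F)$, where $\bar\phi : \bar A \to B(\bar\cH)$ is the conjugate representation on the conjugate Hilbert space, is a Fredholm module for $\bar A$. One checks that this assignment respects compact perturbations, operator homotopies, and unitary equivalence, and is involutive up to canonical isomorphism; it therefore yields a natural isomorphism $KK^{ev/odd}(A, \CC) \cong KK^{ev/odd}(\bar A, \CC)$. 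Setting $A = C_0(X, \cP_\a(\cK))$ and unwinding the definition of $\hKa_{ev/odd}$, we conclude
\[
\hKa_{ev/odd}(X, \a) = KK^{ev/odd}(A, \CC) \cong KK^{ev/odd}(\bar A, \CC) \cong \hKa_{ev/odd}(X, -\a).
\]

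The main obstacle I anticipate is in the first step, namely verifying carefully that the complex-conjugate continuous-trace algebra has Dixmier-Douady class equal to $-[\a]$ and hence genuinely represents the twist $-\a$ up to Morita equivalence. This requires tracing the complex conjugation involution through the central extension (\ref{cen:ext}) and then appealing to the classification (cf.\ \cite{Ros}) of stable separable continuous-trace $C^*$-algebras by their Dixmier-Douady invariant, as recorded just before Theorem \ref{twited:K:top=ana}. The second step is a routine functoriality in $KK$-theory requiring only bookkeeping for the $\ZZ_2$-grading and compatibility with the Kasparov equivalence relation.
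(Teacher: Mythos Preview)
Your argument is correct and takes a genuinely different route from the paper. The paper's proof is geometric: it constructs an involution $\hat{i}$ on $K(\ZZ,3)$ realizing $\a\mapsto -\a$, observes that $\hat{i}\circ W_3$ is homotopic to $W_3$ because $[W_3]$ is $2$-torsion in $H^3(\BSO,\ZZ)$, and uses this to turn every $\a$-twisted $Spin^c$ structure $(M,\i,\nu,\eta)$ into a $(-\a)$-twisted one. This yields an involution on geometric cycles, and the isomorphism on analytic $K$-homology then follows by invoking Theorem~\ref{geo:ana} (the assembly map $\mu:\hKg\to\hKa$ is an isomorphism for smooth manifolds).

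Your approach, by contrast, is purely operator-algebraic: you identify $C_0(X,\cP_{-\a}(\cK))$ with the conjugate (equivalently opposite) continuous-trace algebra and invoke the natural conjugation isomorphism $KK^{ev/odd}(A,\CC)\cong KK^{ev/odd}(\bar A,\CC)$. This avoids the heavy machinery of Theorem~\ref{geo:ana} entirely and is therefore more elementary; it also works for any paracompact Hausdorff $X$, not only smooth manifolds. What the paper's proof buys, on the other hand, is an explicit description of the isomorphism at the level of geometric cycles, which is useful in the context of the paper since the D-brane interpretation in Section~7 is phrased in that language. Your caution about the first step is well placed but the fact you need---that the opposite of a stable continuous-trace algebra has Dixmier--Douady class negated---is standard (see \cite{Ros}); the identification $\bar A\cong A^{op}$ via $a\mapsto a^*$ then completes it.
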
 
\begin{proof} By the Brown representation theorem (\cite{Swi}), there is a continuous map $\hat{i}: K(\ZZ, 3) \to K(\ZZ, 3)$ (unique up to homotopy as $H^2(K(\ZZ, 3), \ZZ)=0$)    such that   
\[
[\hat{i} \circ \a ] = -[\a]  \in H^3(X, \ZZ)
\]
for any map $\a: X\to K(\ZZ, 3)$.  Then we  have
\[
[\hat{i} \circ W_3 ] = -[W_3]  \in H^3(BSO, \ZZ).
\]
As $[W_3]$ is  2-torsion, we  know that $[\hat{i} \circ W_3 ] = -[W_3] = [W_3]$. Therefore, there
is a homotopy  $\eta_0$   connecting $\hat{i} \circ W_3 $ and $W_3$, that is, 
 the following diagram is homotopy  commutative 
\[
\xymatrix{
\BSO  \ar[d]^{W_3} \ar@/{}^{1.2pc}/[drr]^{W_3} && \\
K(\ZZ, 3) \ar@{<==}[ur]_{\eta_0} \ar[rr]^{\hat{i}}&&  K(\ZZ, 3).}
\] 
Note that the homotopy  class of $\eta_0$ as a homotopy connecting $W_3$ and 
$\hat{i} \circ W_3$  is unique due to the fact that $H^2(BSO, \ZZ)=0$. 

Given an $\a$-twisted $Spin^c$  manifold $(M, \i, \nu, \eta)$, then the following homotopy commutative diagram
\[
\xymatrix{M \ar[d]_{\i} \ar[r]^{\nu} & 
\BSO  \ar[d]^{W_3} \ar@2{-->}[dl]_{\eta}  \ar@/{}^{1.3pc}/[drr]^{W_3}  &&  
  &  \\
X \ar[r]_\a  & K(\ZZ, 3) \ar@{<==}[ur]_{\eta_0} \ar[rr]^{\hat{i}} &  & K(\ZZ, 3)  } 
\] 
defines a unique (due to $H^2(\BSO, \ZZ) =0$)  equivalence class of $(-\a)$-twisted   $Spin^c$ structures. Here $-\a =\hat{i}\circ \a$.
We denote by $\hat{i}(M, \i, \nu, \eta)$ this $(-\a)$-twisted   $Spin^c$ manifold. Obviously, 
\[
\hat{i}\bigl(\hat{i}(M, \i, \nu, \eta) \bigr) = (M, \i, \nu, \eta).
\]
The isomorphism $ \hKa_{ev/odd} (X, \a) \cong  \hKa_{ev/odd} (X, - \a)$ is induced by the involution
$\hat{i}$ on geometric cycles. 
\end{proof}

\section{The Chern character in twisted  $K$-theory}

In this Section, we will review the Chern character map in twisted  $K$-theory on smooth  manifolds developed in \cite{CMW} using gerbe connections and curvings. For the topological and analytic definitions, see \cite{AS2} and
\cite{MatSte} respectively.   Recently, Gomi and Terashima  in \cite{GoTe}
gave  another  construction of a Chern
character for twisted $K$-theory using a notion of connection on a finite-dimensional 
approximation of a twisted family of Fredholm operators developed by Gomi (\cite{Gomi}.

 \subsection{Twisted Chern character}

For a fibration $\pi^*: Y \to X$,  let $Y^{[p]}$ denote the $p$th fibered
product. There are projection maps $\pi_i \colon Y^{[p]} \to
Y^{[p-1]}$ which omit the $i$th element for each $i = 1 \dots
p$.  These define a map
\begin{equation}\label{definingdelta1}
\delta \colon \Omega^q(Y^{[p-1]}) \to \Omega^q(Y^{[p]})
\end{equation}
by
\begin{equation}\label{definingdelta2}
\delta(\omega) = \sum_{i=1}^p (-1)^i \pi_i^*(\omega).
\end{equation}
Clearly $\delta^2 = 0$. In fact, the $\delta$-cohomology of this complex vanishes identically, hence, the sequence 
\[\xymatrix{
0\ar[r]& \Omega^q(X) \ar[r]^{\pi^*} & \Omega^q(Y) \ar[r]^\delta  \cdots  &   \Omega^q(Y^{[p-1]}) \ar[r]^\delta  & \Omega^q(Y^{[p]}) \ar[r] &\cdots }
\]
is exact.

Returning now to our particular example, a bundle gerbe  connection on $\cP_\a$ is a unitary connection
$\theta$ on the principal $U(1)$-bundle $\cG_\a$ over 
$\cP_\a^{[2]}$ which  commutes with the bundle gerbe product.    
A bundle gerbe connection $\theta$ has curvature
\[
F_\theta \in \Omega^2(\cP_\a^{[2]})
\]
 satisfying $\delta  (F_\theta) = 0$. 
  There exists a two-form $\omega$  on  $\cP_{\a}$ such that
\[
F_\theta = \pi_2^* (\omega) -\pi_1^* (\omega).
\]
Such an $\omega$  is called a curving for the gerbe connection $\theta$. The choice of a curving is not unique,
  the ambiguity in the choice is precisely the addition of the pull-back 
  to $\cP_{\a}$ of
a two-form on  $X$.   Given a choice of curving $\omega$,  there is   a unique closed  three-form on $\beta $ on $X$ satisfying $d\omega  = \pi^*  \beta $.    We  denote by 
\[
\check{\a} = (\cG_\a, \theta, \omega)
\]
 the lifting bundle  gerbe $\cG_\a$ with the connection $\theta$ and  a curving $\omega$. 
 Moreover
 $H= \dfrac{\beta}{2\pi \sqrt{-1}}$  is a de Rham representative for
the Dixmier-Douady class $[\a]$. We shall 
 call   $\check{\alpha}$ the {\bf differential twisting}, as it is the twisting in differential twisted  $K$-theory (Cf. \cite{CMW}). 
 
 The following theorem is established in \cite{CMW}.

\begin{theorem} Let $X$ be  a   smooth  manifold, $\pi: \cP_\a \to X$ be  a  principal
$PU(\cH)$ bundle over $X$ whose  classifying map is given by 
$\alpha: X\longrightarrow K(\ZZ, 3)$.  Let $  \check{\a} = (\cG_\a, \theta, \omega) $ be a bundle gerbe
  connection $\theta$  and a  curving $\omega$ on the   lifting bundle gerbe $\cG_\a$. There
  is a well-defined  twisted   Chern character  
 \[
 Ch_{\check{\a}}: K^* (X,  \a ) \longrightarrow
 H^{ev/odd}(X,  d - H ). 
\]
Here the groups $H^{ev/odd}(X,  d - H )$ are  the twisted  cohomology groups of the complex of differential 
forms on X with the coboundary operator given by $d-H$. The  twisted   Chern character 
is functorial under the pull-back.  Moreover,  given another differential twisting 
$\check{\a} + b = (\cG_\a, \theta, \omega +\pi^*b) $
for a 2-form $b$ on $X$,  
\[
Ch_{\check{\a}+b}  = Ch_{\check{\a}} \cdot \exp (\dfrac{b}{2\pi\sqrt{-1}}). 
\]
  \end{theorem}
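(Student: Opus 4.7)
The plan is to construct the twisted Chern character first for torsion twistings via bundle gerbe modules with compatible connections, and then extend to arbitrary twistings. Throughout, write $H := \beta/(2\pi\sqrt{-1})$ for the de Rham representative of $[\a]$ determined by the curving $\omega$. For the torsion case, the proposition identifying $K^0(X,\a)$ with the Grothendieck group of $\cG_\a$-bundle gerbe modules lets me represent a class by $E \to \cP_\a(n)$. I would equip $E$ with a \emph{bundle gerbe module connection} $\nabla^E$, that is, a unitary connection for which the module action $\phi$ is horizontal with respect to $\theta$ and $\nabla^E$. Such connections exist by a partition-of-unity argument, and compatibility with $\theta$ forces
\[ \pi_2^\ast F_{\nabla^E} - \pi_1^\ast F_{\nabla^E} = F_\theta \cdot \mathrm{Id}_E \quad \text{on } \cP_\a(n)^{[2]}. \]
Since $\delta\omega = F_\theta$, the $\mathrm{End}(E)$-valued two-form $F_{\nabla^E} + \pi^\ast\omega\cdot\mathrm{Id}_E$ is $\delta$-closed; applying $\mathrm{tr}\exp(\,\cdot\,/(2\pi\sqrt{-1}))$ yields a $PU(n)$-invariant form that descends to a form $Ch_{\check\a}(E)$ on $X$.

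Next I would verify $(d-H)$-closedness. The Bianchi identity, together with $\mathrm{tr}([A,\cdot])=0$ and $d\omega = \pi^\ast\beta$, gives by a direct computation $d\,Ch_{\check\a}(E) = H \wedge Ch_{\check\a}(E)$, so $(d-H)\,Ch_{\check\a}(E)=0$. Independence of the chosen module connection I would establish via a Chern--Simons transgression: linearly interpolating between two such connections produces a form $TCh$ on $X$ with $(d-H)\,TCh$ equal to the difference of the two candidate Chern characters. Additivity over direct sums, invariance under bundle gerbe module equivalences, and stabilisation then promote $Ch_{\check\a}$ to a well-defined homomorphism $K^0(X,\a) \to H^{\mathrm{ev}}(X, d-H)$, with the odd case following by suspension.

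The main obstacle is the passage to non-torsion twistings, where $\cP_\a(\Fred)$ admits no finite-rank reduction and honest finite-rank bundle gerbe modules are unavailable. I would follow \cite{CMW}: represent a class in $K^0(X,\a)$ by a fibrewise difference of projections $P - P_0$ in sections of $\cP_\a(\cK)$ that is trace-class, equip both with compatible connections on the infinite-dimensional structure, and interpret $\mathrm{tr}\exp$ as a formal sum of trace-class operator-valued forms. All the algebraic identities established in the torsion case then persist, provided the trace-class conditions are controlled and one reduces to compactly supported representatives.

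Finally, functoriality under pullback is automatic, since all underlying data---lifting bundle gerbe, connection, curving, module, module connection---pull back canonically and the defining formula is natural. The change-of-curving rule is transparent: replacing $\omega$ by $\omega + b$ shifts $F_{\nabla^E} + \pi^\ast\omega\cdot\mathrm{Id}_E$ by $\pi^\ast b\cdot\mathrm{Id}_E$, which commutes with everything and factors out of the exponential as $\exp(b/(2\pi\sqrt{-1}))$, multiplying $Ch_{\check\a}$ exactly as claimed.
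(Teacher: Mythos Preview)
Your torsion-case construction via bundle gerbe module connections is essentially correct (modulo a small imprecision: it is the scalar form $\mathrm{tr}\exp((F_{\nabla^E}+\omega)/2\pi\sqrt{-1})$ that is $\delta$-closed and descends, not the $\mathrm{End}(E)$-valued form itself, since $\pi_1^*E$ and $\pi_2^*E$ are only identified through the module action). This is the BCMMS-style approach and it works. However, it is \emph{not} the route the paper takes, and your extension to the non-torsion case is where the proposal becomes a genuine gap.

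The paper does not split into torsion and non-torsion cases at all. Instead it works uniformly with the Fredholm-operator model: a class in $K^0(X,\a)$ is represented by a $PU(\cH)$-equivariant map $f:\cP_\a\to\Fred$, and over a good cover one uses spectral gaps of $f(\phi_i(x))$ to build finite-rank bundles $E_i$ and parametrices $q_i$. Comparing parametrices on overlaps produces transition data $\{(g_{ij},f_{ij})\}$ satisfying a twisted cocycle relation, which is an honest cocycle in the semi-direct product $\mathfrak{G}=PU(\cH)\rtimes GL(\infty)$. One then puts a connection on the resulting principal $\mathfrak{G}$-bundle, lifts locally to $U(\cH)\times GL(\infty)$, projects the curvature to the ideal $\mathbf{u}(\infty)\oplus\CC$, and defines the Chern form locally as $e^{B_i}\,\mathrm{tr}\,e^{F'_{\nabla,i}/2\pi i}$, where $B_i$ are the local curving two-forms. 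The odd case uses $U_{res}$ and $\mathfrak{H}=PU_0(\cH)\rtimes U_{res}$ analogously. Your description of the non-torsion case---``trace-class differences of projections $P-P_0$ in sections of $\cP_\a(\cK)$''---is neither what \cite{CMW} does nor a construction you have made precise; in particular you have not explained why every twisted $K$-class admits such a representative or how the required traces converge. The $\mathfrak{G}$-bundle mechanism is exactly what replaces finite-rank bundle gerbe modules in the general case, and it is the missing idea in your proposal.
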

\begin{proof}

Choose a good open cover $  \{V_i\}$ of $X$
such that $\cP_\a \to X$ has  trivializing sections $\phi_i$ over each $V_i$ with transition functions
$g_{ij}:  V_i \cap V_j \longrightarrow PU(\cH)$
satisfying  $\phi_j = \phi_i g_{ij}$. 
Define $\{\sigma_{ijk}\}$ by   $\hat{g}_{ij} \hat{g}_{jk}= \hat{g}_{ik}\sigma_{ijk} $ for 
 a lift of $g_{ij}$ to $\hat{g}_{ij}:  V_i \cap V_j \to U (\cH) $.
 Note that the pair $(\phi_i, \phi_j)$  defines a section of $\cP^{[2]}_{\a}$
over $V_i \cap V_j$. 
  The connection $\theta$ can be pulled back by $(\phi_i, \phi_j)$
to define  a 1-form
$A_{ij}$ on $V_i\cap  V_j$ and the curving $\omega$ can be pulled-back by the $\phi_i$
 to define two-forms $B_i$ on $V_i$. Then the differential twisting defines 
the triple
\ba\label{deligne:cocycle}
\{(\sigma_{ijk}, A_{ij}, B_i)\}
\na
which  is a   degree two smooth Deligne cocycle. 
Now we explain in some detail the twisted Chern characters in both the odd
and even case following \cite{CMW}.
  
\underline{\bf The even case:}  
As a model for the $K^0$ classifying space,  
we choose $\Fred$,  the space of bounded self-adjoint  Fredholm operators with 
essential spectrum $\{\pm 1\}$ and otherwise discrete spectra, with a grading operator $\Gamma$ which anticommutes with the 
given family of Fredholm operators.

A  twisted K-class in $K^0(X,  \a ) $ can be represented by  $f: \cP_\a \to \Fred $, a $PU(\cH)$-equivariant  family of Fredholm operators.  We can select an open cover $\{V_i\}$  of $X$ such that on each $V_i$  there is a local section  $\phi_i: V_i \to \cP_\a$ and  for each $i$ the Fredholm operators $f(\phi_i(x))$, $x\in V_i$ have a 
gap in the spectrum at  both $\pm \lambda_i \neq 0.$ Then over $V_i$ we have a finite rank vector bundle $E_i$ defined  by the spectral projections of the operators  $f(\phi_i(x))$
corresponding to the interval  $[-\lambda_i,\lambda]$.

 Passing to a finer cover $\{U_i\}$ if necessary, we may assume that $E_i$ is a trivial vector bundle over $U_i$  of rank $n_i.$
 Choosing a trivialization of $E_i$  gives  a  $\ZZ_2 $ graded parametrix  $q_i$ (an inverse up to
finite rank operators) of the family $f\circ \phi_i.$    In the index zero sector the operator $q_i(x)^{-1}$ is defined as the direct sum of the restriction
of $f(\phi_i(x))$ to the orthogonal complement of $E_i$ in $\cH$ and an isomorphism between the vector bundles $E^+_i$ and $E_i^-.$  Clearly
then $f(\phi_i(x)) q_i(x) =1$ modulo rank $n_i$ operators. 
In the case of nonzero index one defines a parametrix as a graded invertible operator $q_i$ such that $f(\phi_i(x)) q_i(x) = s_n$ modulo
finite rank operators, 
with $s_n$ a fixed Fredholm operator of  index $n$ equal to the index of $f(\phi_i(x)).$ 

On the overlap $U_{ij}$ we have a pair of parametrices $q_i$ and $ q_j $ of families of $f\circ \phi_i$
and $f\circ \phi_j$  respectively. These are related by an invertible operator $f_{ij}$ 
which is of the form
$1+$ a finite rank operator, 
$$  \hat{g}_{ij} q_j (x) \hat{g}_{ij}^{-1}  = q_i(x) f_{ij}(x).$$ 
The conjugation on the left hand side  by $\hat{g}_{ij}$  comes from the equivariance relation
\[
 f(\phi_j(x)) = f(\phi_i(x)g_{ij}(x)) = \hat{g}_{ij}(x)^{-1} f(\phi_i(x))\hat{g}_{ij}(x).
\] 

The system $\{f_{ij}\}$ does not quite satisfy the \v{C}ech cocycle relation needed to define a principal bundle, because of the
different local sections $\phi_i: U_i \to \mathcal{P}_{\sigma}$ involved. Instead, we have
 on $U_{ijk}$ 
$$\hat{g}_{jk} q_k \hat{g}_{jk}^{-1} = q_j f_{jk} = (\hat{g}_{ij}^{-1} q_i f_{ij} \hat{g}_{ij} )f_{jk}
= \hat{g}_{jk}( \hat{g}_{ik}^{-1} q_i f_{ik}\hat{g}_{ik} ) \hat{g}_{jk} ^{-1}.$$
Using the   relation $\hat{g}_{jk}  \hat{g}_{ik}^{-1} =\sigma_{ijk} \hat{g}_{ij}^{-1}$,  we get
\[
    \hat{g}_{jk} ( \hat{g}_{ik}^{-1} q_i f_{ik}\hat{g}_{ik} )  \hat{g}_{jk}^{-1} = \hat{g}_{ij}^{-1} q_i f_{ik} \hat{g}_{ij} 
\]
multiplying the last equation from right by $\hat{g}_{ij}^{-1}$ and from the 
left by $q_i^{-1}\hat{g}_{ij} $ one gets the twisted cocycle relation
\[
 f_{ij}(\hat{g}_{ij} f_{jk} \hat{g}_{ij}^{-1}) = f_{ik}, 
\]
 which is independent of the choice of the lifting $ \hat{g}_{ij}$.  For simplicity, we will just
 write the above  twisted cocycle relation as 
\ba\label{twisted:cocycle}
 f_{ij}(g_{ij} f_{jk} g _{ij}^{-1}) = f_{ik}. 
 \na

This twisted cocycle relation (\ref{twisted:cocycle})    actually  defines an untwisted cocycle relation
for $\{(g_{ij}, f_{ij})\}$  in the twisted product 
\[
\mathfrak{G} = PU(\cH) \rtimes GL(\infty),
\] 
where the group $PU(\cH)$ acts on the group $GL(\infty)$ of invertible  $1+$ finite rank operators by conjugation.  Thus the product
in $\mathfrak{G}$ is given by
$$ (g, f) \cdot (g',f')= (gg', f(g f' g^{-1}) ).$$
The cocycle relation for the pairs $\{(g_{ij}, f_{ij})\}$ then encodes both the cocycle relation for the transition functions $\{g_{ij}\}$ of the
$PU(\cH)$ bundle over $X$ and the twisted cocycle relation  (\ref{twisted:cocycle}).  In summary, this cocycle   $\{(g_{ij}, f_{ij})\} $ defines a principal
$\mathfrak{G}$ bundle over $X.$

   The classifying  space $B\mathfrak{G}$ is a fiber bundle over $K(\mathbb Z,3).$ 
  The fiber   at  each point in $ K( \mathbb Z,3)$ is 
  homeomorphic (but not canonically so) to the space $\Fred$ of graded Fredholm operators;  to set up the isomorphism one needs a choice of element  in  each fiber.    Given a principal $PU(\cH)$-bundle $\cP_\a$
 over $X$ defined by $\a: X \to K(\ZZ, 3)$,  the  even twisted  $K$-theory  $K^0(X, \cP_{\a})$ is the
set of homotopy classes of maps $X \to B\mathfrak{G}$ covering the map $\a$.

Next we construct the twisted Chern character from a connection $\nabla$ on a principal $\mathfrak{G}$ bundle  over $X$ associated to the cocycle $\{(g_{ij}, f_{ij})\}$.   Locally, on a 
good open cover  $\{U_i\}$ of $X$ we can lift the connection to a connection taking values in the Lie algebra $\hat{\mathbf{g}}$ of the central
extension $U(H) \times GL(\infty)$ of $\mathfrak{G}.$  Denote by $\hat F_{\nabla}$ the curvature of this connection. On the overlaps $\{U_{ij}\}$ 
the curvature satisfies a twisted relation
$$ \hat F_{\nabla,j} =  Ad_{(g_{ij}, f_{ij})^{-1}}  \hat{F}_{\nabla,i}  + g^*_{ij} c,$$
where $c$ is the curvature of the canonical connection $\theta$ on
 the principal $U(1)$-bundle $U(\cH)\to PU(\cH).$ 

Since the Lie algebra $\bold{u}(\infty) \oplus \CC$ is an ideal in the Lie algebra of $U(\cH) \rtimes GL(\infty),$ the projection $F'_{\nabla,i}$ of the curvature
$\hat F_{\nabla,i}$ onto this subalgebra transforms in the same way as $\hat F$ under change of local trivialization. It follows that for a $PU(\cH)$-equivariant map $f: \cP_\a \to \Fred$,  we can define a twisted 
Chern character form of $f$ as
\ba\label{local}
 ch_{\check{\a}} (f, \nabla )   = e^{B_i} \text{tr}\, e^{ F'_{\nabla,i}/2\pi i},
 \na
 over $V_i$. 
Here the trace is well-defined on $\mathbf{gl}(\infty)$ and on the center $\CC$ it is defined as the coefficient of the unit operator.  
Note that   $ch_{\check{\a}} (f, \nabla )$  is globally defined and $(d-H)$-closed
$$(d-H) ch_{\check{\a}} (f,  \nabla )  =0,$$ 
and depends on the differential twisting
\[
\check{\a} = (\cG_\a, \theta, \omega ).
\]
Let $f_0$ and $f_1$ be homotopic,  and $\nabla_0$ and $\nabla_1$ be two connections on the  principal $\mathfrak{G}$ bundle   over $X$, we have a Chern-Simons type form
\[
CS((f_0, \nabla_0), (f_1, \nabla_1)), 
\]
well-defined modulo $(d-H)$-exact forms, such that
\ba\label{CS}
 ch_{\check{\a}} (f_1, \nabla_1 )  -  ch_{\check{\a}} (f_0, \nabla_0 )  = (d-H) CS((f_0, \nabla_0), (f_1, \nabla_1)).
 \na
The proof follows directly from the local computation  using (\ref{local}).
Hence, the $(d-H)$-cohomology class of $ ch_{\check{\a}} (f,  \nabla ) $ does not depend   choices of  a connection $\nabla$ on the  principal $\mathfrak{G}$ bundle   over $X$,  and depends only on the homotopy class of $f$.    We denote the $(d-H)$-cohomology class of $ch_{\check{\a}} (f,  \nabla ) $ by
$Ch_{\check{\a}} ([f_1])$ which is a natural homomorphism
\[
 Ch_{\check{\a}}: K^0 (X,  \a ) \longrightarrow
 H^{ev}(X,  d - H ). 
\]
 From (\ref{local}), we have 
 \[
Ch_{\check{\a}+b}  = Ch_{\check{\a}} \cdot \exp (\dfrac{b}{2\pi\sqrt{-1}}). 
\]
for a  differential twisting 
$\check{\a} + b = (\cG_\a, \theta, \omega +\pi^*b) $.

\underline{\bf The odd case:}  The odd case is a little easier. First, as a model for the $K^1$ classifying space,  
we choose $U(\infty) =\vlim_{n} U(n)$, the stabilized unitary group.  Let  $\Theta$ be the universal odd 
character form on $U(\infty)$ defined by the canonical left invariant $u(\infty)$-valued form on $U(\infty)$.

  Let $\cH = \cH_+ \oplus \cH_-$ be a polarized 
Hilbert space and let  $U_{res}=U_{res}(\cH)$ denotes the group of unitary operators in $\cH$ with Hilbert-Schmidt off-diagonal blocks.  The conjugation action of $U(\cH_+)\times U(\cH_-)$  on $U_{res}$ defines an action of $PU_0(\cH) = P(U(\cH_+)\times U(\cH_-))$  on $U_{res}$. 
Note that the classifying space of 
 $U_{res}$ is $U(\infty)$.     

Define
\[
\mathfrak{H} =PU_0(\cH) \rtimes U_{res}.
\]
Then given a principal $PU_0(\cH)$-bundle $\cP_\a$
 over $X$ defined by $\a: X \to K(\ZZ, 3)$,  the odd  twisted  $K$-theory  $K^1(X, \a)$ is the set of homotopy classes of maps $X \to B\mathfrak{H}$ covering the map $\a$. These are represented by  $PU_0(\cH)$-equivariant maps $f:  \cP_\a\to U(\infty)$.   With respect to trivializing sections $\phi_i$ over each 
 $V_i$.  Then
 \[
 e^{B_i}  (f\circ \phi_i)^*\Theta
 \]
 is a globally defined and $(d-H)$-closed differential form on $X$.  This defines the odd
 version of the twisted Chern character
 \[
 Ch_{\check{\a}}:  K^1(X, \a) \longrightarrow H^{odd}(X, d-H).
 \]
 \end{proof}

   \subsection{Differential twisted  $K$-theory}
   
     Recall  that  the Bockstein exact sequence in complex  $K$-theory for any finite CW complex:
   \ba \label{Bockstein}
 \xymatrix{
 K^0(X)\ar[r]^{ch}& H^{ev}(X, \RR) \ar[r]& K^0_{\RR/\ZZ} (X)
 \ar[d]\\
 K^1_{\RR/\ZZ} (X) \ar[u] & H^{odd} (X, \RR)   \ar[l]&
   K^1(X) \ar[l]^{ch}
   }
   \na 
   where $K^*_{\RR/\ZZ}(X)$ is $K$-theory with $\RR/\ZZ$-coefficients as
   in \cite{Kar1} and \cite{Ba}.
   

Analogously, in  twisted  $K$-theory, given a smooth manifold $X$ with a twisting $\a: X\to K(\ZZ, 3)$, upon a choice of a differential twisting
   \[
   \check{\a} = (\cG_\a, \theta, \omega)
   \]
   lifting $\a$, we have  the corresponding Bockstein exact sequence in twisted  $K$-theory
    \ba \label{Bockstein:twisted}
 \xymatrix{
 K^0(X, \a)\ar[r]^{Ch_{\check{\a}} \qquad }& H^{ev}(X, d-H ) \ar[r]& K^0_{\RR/\ZZ} (X, \a)
 \ar[d]\\
 K^1_{\RR/\ZZ} (X, \a) \ar[u] & H^{odd} (X, d-H)   \ar[l]&
   K^1(X, \a) \ar[l]^{\qquad Ch_{\check{\a}}}
   }.
   \na 
   Here $ K^0_{\RR/\ZZ} (X, \a)$ and $ K^1_{\RR/\ZZ} (X, \a)$ are subgroups of
   differential twisted  $K$-theory, respectively $ \check{ K}^0(X,\check{ \a} )$ and $ \check{ K}^1(X,\check{ \a} )$ (see \cite{CMW} for the detailed construction).  Here we give another equivalent
   construction of differential twisted  $K$-theory.
   
  Fix a  choice of  a connection $\nabla$ on a principal $\mathfrak{G}$ bundle   over $X$. 
 Then  $ \check{ K}^0(X,\check{ \a} )$ is the abelian group generated by pairs
  \[
  \{(f, \eta)\},
  \]
 modulo  an equivalence relation, where $ f: \cP_\a \to \Fred$ is a $PU(\cH)$-equivariant map  and 
 $\eta$ is an odd differential form modulo $(d-H)$-exact forms.  Two pairs $(f_0, \eta_0)$ and 
 $ (f_1, \eta_1) $ are called equivalent if and only if
 \[
 \eta_1 -\eta_0 = CS((f_1, \nabla), (f_0, \nabla)). 
 \]
 The differential Chern character form of $f$ is given by  
 \[
 ch_{\check{\a}} (f,  \nabla) - (d-H) \eta
 \]
which defines a homomorphism
\[
ch_{\check{\a}}:   \check{ K}^0(X,\check{ \a} )  \longrightarrow \Omega^{ev}_0(X, d-H), 
\]
where $ \Omega^{ev}_0(X, d-H)$  is the image of $ch_{\check{\a}}:  \check{ K}^0(X,\check{ \a} )  
\to \Omega^{ev}(X)$.  The kernel of $ch_{\check{\a}}$ is  isomorphic to $ K^1_{\RR/\ZZ} (X, \a)$.

Similarly, we define the odd differential twisted  $K$-theory $ \check{ K}^1(X,\check{ \a} )$  with the 
differential Chern character form homomorphism
\[
ch_{\check{\a}}:   \check{ K}^1(X,\check{ \a} )  \longrightarrow \Omega^{odd}_0(X, d-H). 
\]
The  kernel of $ch_{\check{\a}}$ is  isomorphic to $ K^0_{\RR/\ZZ} (X, \a)$.  
 The following commutative diagrams were established in \cite{CMW} relating   differential twisted  $K$-theory with twisted  $K$-theory and with the diagram 
(\ref{Bockstein:twisted})
\[
  \xymatrix{  &   0\ar[d] & \\
    H^{odd}(X, d- H ) \ar[r]   \ar[d]   & K^1_{ \RR/\ZZ} (X, \check{\sigma}) \ar[rd]  \ar[d]  &    \\
    0\to\disp{\frac{ \Omega^{odd}(X)}{\Omega_0^{odd}(X,d-  H)}}   \ar[r]\ar[dr]_{d-H} &
     \check{K}^0  (X, \check{\sigma}) \ar[r]\ar[d]^{\ ch_{\check{\sigma}}} 
      & K^0(X, \sigma)\ar[d]^{Ch_{\check{\sigma}}} \to 0 \\
         & \Omega_0^{ev}(X, d- H) \ar[r] \ar[d]  & H^{ev}(X, d- H ) \\
         &   0  & }
         \]
         and 
     \[     \xymatrix{&   0\ar[d] & \\
       H^{ev}(X, d- H )  \ar[r] \ar[d] & K^0_{ \RR/\ZZ} (X, \check{\sigma} ) \ar[rd] \ar[d]  &   \\
    0\to\disp{\frac{ \Omega^{ev}(X)}{\Omega_0^{ev}(X, d-H)}}   \ar[r] \ar[r]\ar[dr]_{d-H} &
     \check{K}^{1}  (X, \check{\sigma}) \ar[r]\ar[d]^{\ ch_{\check{\sigma}}} 
      & K^1(X, \sigma)\ar[d]^{Ch_{\check{\sigma}}} \to 0 \\
         & \Omega_0^{odd}(X, d- H) \ar[r] \ar[d]  & H^{odd}(X, d- H ) \\
         &   0  &  }
         \]
         with exact horizontal and vertial sequences, and exact upper-right and exact lower-left 4-term
         sequences. 
         We expect that  these two commutative diagrams     uniquely characterize differential
         twisted  $K$-theory.

\subsection{Twisted Chern character for torsion twistings}

In this paper, we will only use the twisted Chern character for a torsion  twisting and in this case we will give an explicit
construction.
 Let $E$ be a real oriented vector bundle of   rank $2k$  over $X$ with its orientation twisting denoted
 by
 \[
 o(E):  X\to K(\ZZ, 3).
 \]
 The  associated  lifting bundle $\cG_{o(E)}$ has a canonical reduction to the
 $Spin^c$  bundle gerbe  $\cG_{W_3(E)}$.
 
 Choose a local trivialization of $E$ over a good open cover $\{V_i\}$ of $X$. Then the transition
functions 
\[
g_{ij}:  V_i\cap V_j \longrightarrow SO(2k).
\]
define an element in $H^1(X, \underline{SO(2k)})$ whose image under the Bockstein  exact sequence
\[
H^1(X, \underline{Spin(2k)}) \to H^1(X, \underline{SO(2k)}) \to H^2(X, \ZZ_2)
\]
is the second Stieffel-Whitney class $w_2(E)$ of $E$. 
Denote the differential twisting  by 
\[
\check{w}_2(E) = (\cG_{W_3(E)}, \theta, 0), 
\]
 the $Spin^c$  bundle gerbe   $\cG_{W_3(E)}$
with a flat connection  $\theta$ and a trivial curving.   With respect to a good cover $\{V_i\}$
of $X$ the  differential twisting $\check{w}_2(E)$ defines a Deligne
cocycle 
$\{(\a_{ijk}, 0, 0)\}$
with  trivial local $B$-fields, here $\a_{ijk}  = \hat{g}_{ij} \hat{g}_{jk}\hat{g}_{ki}  $ where $\hat{g}_{ij}: U_{ij} \to Spin^c (2n)$  is  a lift of $g_{ij}$.

By Proposition \ref{Cliff}, a    twisted K-class in $K^0(X,  o(E))$   can be represented by
a Clifford bundle, denoted $\cE$.   Equip  $\cE$  with a Clifford connection, 
 and  $E$  with a $SO(2k)$-connection. 
Locally, over each $V_i$  we let
$\cE|_{V_i} \cong S_i\otimes \cE_i $
where $S_i$ is the local fundamental spinor bundle associated to $E|_{V_i}$ with the standard Clifford  action
of $\Cliff (E|_{V_i})$ obtained from the fundamental representation of $Spin(2k)$.  Then $\cE_i$ is a complex vector bundle
over $V_i$ with a connection $\nabla_i$  such that
on $V_i\cap V_j$ 
\[
Ch(\cE_i, \nabla_i)  = Ch(\cE_j, \nabla_j). 
\]
Hence, the twisted Chern character 
\[
Ch_{\check{w}_2(E)}: K^0(X, o(E)) \longrightarrow  H^{ev} (X) 
\]
is given by   $[\cE]\mapsto \{[ch(\cE_i, \nabla_i)] = ch(\cE_i)\}$.  The proof of the following proposition is
straightforward. 

\begin{proposition}\label{ring:homo}
The   twisted Chern character 
 satisfies the following identities\begin{enumerate}
\item $Ch_{\check{w}_2(E_1\oplus E_2)} ([\cE_1\oplus \cE_2]) = 
Ch_{\check{w}_2(E_1)} ([\cE_1])  + Ch_{\check{w}_2(E_2)} ([\cE_2])$.
\item $Ch_{\check{w}_2(E_1\otimes  E_2)} ([\cE_1\otimes  \cE_2]) = 
Ch_{\check{w}_2(E_1)} ([\cE_1])   Ch_{\check{w}_2(E_2)} ([\cE_2])$. 
\end{enumerate}
\end{proposition}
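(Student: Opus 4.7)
The plan is to verify both identities by working in the local model used to define $Ch_{\check{w}_2(E)}$. Choose a good open cover $\{V_i\}$ of $X$ that simultaneously trivialises $E_1$, $E_2$, $E_1\oplus E_2$, and $E_1\otimes E_2$, together with $SO$-connections on each summand. On each $V_i$ one has the local decomposition
\[
\cE_a|_{V_i} \;\cong\; S_{a,i} \otimes F_{a,i}\qquad (a=1,2),
\]
where $S_{a,i}$ is the fundamental spinor bundle attached to $E_a|_{V_i}$ and $F_{a,i}$ is the local complex ``twist'' bundle, and by definition $Ch_{\check{w}_2(E_a)}([\cE_a])$ is represented by the collection $\{ch(F_{a,i},\nabla_{a,i})\}$, which is globally well-defined because the transition behavior of the $S_{a,i}$ cancels on overlaps.

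The first step is to record the standard local identifications between fundamental spinor bundles under direct sum and tensor product of the underlying oriented real bundles: for $E_1\oplus E_2$ of even rank one has $S_{E_1\oplus E_2,i}\cong S_{E_1,i}\hat\otimes S_{E_2,i}$, so the local twist factor attached to the module built from $\cE_1$ and $\cE_2$ for $\Cliff(E_1\oplus E_2)$ is the direct sum $F_{1,i}\oplus F_{2,i}$; for $E_1\otimes E_2$ a similar identification yields local twist factor $F_{1,i}\otimes F_{2,i}$. These identifications are compatible with the chosen Clifford connections up to conjugation by the local spin transition cocycles $\{\a_{ijk}\}$, which do not contribute to the Chern character of the twist factor.

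Once these local identifications are in place, both formulas reduce to the classical identities for the ordinary Chern character:
\[
ch(F_{1,i}\oplus F_{2,i}) = ch(F_{1,i}) + ch(F_{2,i}),\qquad
ch(F_{1,i}\otimes F_{2,i}) = ch(F_{1,i})\, ch(F_{2,i}).
\]
Since representatives agreeing locally with these formulas patch to globally well-defined closed forms on $X$ (the patching reduces to that of $Ch_{\check{w}_2(E_a)}([\cE_a])$ on each factor), passing to cohomology yields both identities of the proposition.

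The only real subtlety is bookkeeping the graded versus ungraded tensor product in the spinor identifications, and verifying that the extra gauge freedom on the local twist factors (the Deligne cocycle $\{(\a_{ijk},0,0)\}$ being flat with trivial $B$-field) genuinely makes the local Chern characters glue into the asserted sum or product. This is a routine consistency check, which is why the authors call the proof straightforward; the main obstacle, such as it is, consists only in tracking the local spinor identifications with enough care to see that no anomalous factor enters.
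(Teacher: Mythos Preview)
The paper gives no proof beyond the remark that it is ``straightforward'', so there is nothing to compare at the level of detail; your overall strategy --- pass to a good trivialising cover, use the local splitting $\cE|_{V_i}\cong S_i\otimes F_i$, and reduce to the classical identities $ch(F\oplus F')=ch(F)+ch(F')$ and $ch(F\otimes F')=ch(F)\,ch(F')$ --- is exactly the intended one.

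However, your second paragraph contains a genuine error in how the local twist factors are identified. For part~(1) you write that because $S_{E_1\oplus E_2,i}\cong S_{E_1,i}\hat\otimes S_{E_2,i}$, the twist factor of ``the module built from $\cE_1$ and $\cE_2$ for $\Cliff(E_1\oplus E_2)$'' is $F_{1,i}\oplus F_{2,i}$. This does not follow: the direct sum $(S_{1,i}\otimes F_{1,i})\oplus(S_{2,i}\otimes F_{2,i})$ is \emph{not} of the form $(S_{1,i}\hat\otimes S_{2,i})\otimes(\text{anything})$, and in fact $\cE_1\oplus\cE_2$ is not a $\Cliff(E_1\oplus E_2)$-module at all when $E_1\neq E_2$. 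The additive identity should be read with a fixed twisting (the Chern character is a group homomorphism on $K^0(X,o(E))$), in which case the local twist factor of $\cE_1\oplus\cE_2$ is simply $F_{1,i}\oplus F_{2,i}$ with the \emph{same} $S_i$ on both summands --- no spinor identification for a direct sum of real bundles is involved. Similarly, for part~(2) there is no useful relation between $\Cliff(E_1\otimes_\RR E_2)$ and the Clifford algebras of the factors; the multiplicative identity refers to the product $K^0(X,o(E_1))\times K^0(X,o(E_2))\to K^0(X,o(E_1)+o(E_2))$, realised on Clifford modules by the graded tensor product $\cE_1\hat\otimes\cE_2$ as a $\Cliff(E_1\oplus E_2)$-module (via $\Cliff(E_1\oplus E_2)\cong\Cliff(E_1)\hat\otimes\Cliff(E_2)$). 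With that reading the local twist factor is indeed $F_{1,i}\otimes F_{2,i}$, and your reduction to the classical formula goes through. In short: keep the local argument, but correct the interpretation of the operations --- the $\oplus$ and $\otimes$ on the $E_a$ side are not what govern the module-level operations.
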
 

In the case that $E$ has a   $Spin^c$ structure
whose determinant line bundle is $L$,  there is a canonical
isomorphism 
\[
K^0(X) \longrightarrow  K^0(X, o(E)), 
\]
given by $[V] \mapsto [V\otimes S_E]$ where $S_E$ is the associated spinor bundle of $E$. Then we have
\[
Ch_{\check{w}_2(E)} ([V\otimes S_E]) =  e^{\frac{c_1(L) }{2}} ch ([V]),
\]
where $ch([V])$ is the ordinary Chern character of $[V] \in K^0(X)$. 

In particular, when $X$ is an even dimensional
Riemannian manifold, and $TX$ is equipped with the Levi-Civita
connection,  under the  identification of  $K^0(X,  o(E))$ with  the 
Grothendieck group of Clifford modules.  Then  
\ba\label{TX}
Ch_{\check{w}_2(X)} ([\cE]) =  ch(\cE/S)
\na
where $ch(\cE/S)$ is the relative  Chern character of the Clifford module $\cE$ constructed
in Section 4.1 of  \cite{BGV}.

  \section{Thom classes and Riemann-Roch  formula  in twisted  $K$-theory}

\subsection{The Thom class}
Given any   oriented  real vector bundle $\pi: E\to X$ of rank $2k$, $E$ admits a $Spin^c$ structure if 
its classifying map $\tau: X\to BSO(2k)$ admits a lift $\tilde{\tau}$
\[
\xymatrix{ & \BSpin^c\ar[d]\\
X\ar[r]^{\tau}\ar@{-->}[ur]^{\tilde{\tau}}  &  \BSO(2k).}
\]
As $\BSpin^c \to BSO(2k)$ is a $BU(1)$-principal bundle with the classifying map given by 
\[
W_3:  \BSO(2k) \to K(\ZZ, 3),
\]
 $E$ admits a $Spin^c$ structure if  $W_3\circ \tau: X\to K(\ZZ, 3)$ is null homotopic, and 
 a choice of null homotopy   determines   a $Spin^c$ structure on $E$.  Associated to a $Spin^c$ structure $\s$ on $E$, there is canonical  $K$-theoretical Thom class 
 \[
 U_E^\s= [\pi^*S^+, \pi^*S^-, cl]  \in  K^0_{cv}(E)
 \]
 in the   $K$-theory of $E$ with  vertical compact  supports. Here $S^+$ and $S^-$  are the positive and negative spinor bundle over $X$ defined by the $Spin^c$ structure on $E$, and $cl$ is
 the bundle map $\pi^*S^+ \to \pi^*S^-$ given by the Clifford action $E$ on $S^{\pm}$.  
 
\begin{remark} \begin{enumerate}
\item  The restriction of  $U_E^\s$  to each fiber is a generator of $K^0(R^{2k})$,  so     a  $Spin^c$ structure on $E$  is equivalent to  a K-orientation on $E$.  Note that Thom classes and  K-orientation 
are functorial under pull-backs of $Spin^c$ vector bundles. 
\item Let $\s\otimes L$ be  another $Spin^c$ structure on $E$ which differs from $\s$ by a
complex line bundle $p:  L \to X$, then 
\[
U_E^{\s'} = U_E^\s\cdot   p^*([L]).
\]
\item  Let $(E_1, \s_1)$ and $(E_2, \s_2)$ be two $Spin^c$ vector bundles over $X$, $p_1$ and $p_2$ be the projections from $E_1\oplus E_2$ to $E_1$ and $E_2$ respectively,  then 
\[  U_{E_1\oplus E_2}^{\s_1\oplus \s_2} = 
p^*_1( U_{E_1}^{\s_1} )  \cdot p_2^*(U_{E_2}^{\s_2})
 \in  K_{cv}^0(E_1\oplus E_2).
  \]

\item The Thom  isomorphism in  $K$-theory for a $Spin^c$  vector bundle $\pi: E\to X$ of rank $2k$ is given by
 \[
 \begin{array}{rcc}
\Phi^K_E:  \qquad   K^0(X) &\longrightarrow & K^0(E) \\
a &\mapsto  & \pi^*(a) U_E^\s. \end{array}
\]
Here for  locally compact spaces, we shall consider only  $K$-theory with compact supports. 
 When $X$ is compact,  $U_E^\s \in K^0(TX) $ and  the Thom
 isomorphism $\Phi_X^K$ is the inverse of the push-forward map 
 \[
 \pi_!: K^0(TX) \to K ^0(X)\]
associated to  the K-orientation of $\pi$ defined  the $Spin^c$ structure on $E$.  
 \end{enumerate}
 
\end{remark}

If an oriented vector bundle $E$ of even rank over $X$  does not admit a $Spin^c$ structure,   $W_3\circ \tau: X\to K(\ZZ, 3)$ is  not   null homotopic.  Thus,  $W_3\circ \tau$ defines a twisting  on $X$ for  $K$-theory, called the orientation twisting $o_E$. In this Section 
  we will define a canonical Thom class
\[
U_E \in K^0(E, \pi^*o_E) 
\]
such that $a\mapsto \pi^*(a)\cup U_E $ defines the Thom isomorphism $K^0(X, o_E) \cong K^0(E)$.
In fact, $a\mapsto \pi^*(a)\cup U_E $ defines the Thom isomorphism  (Cf. \cite{CW1}) 
\[
K^0(X, \a + o_E) \cong K^0(E, \a\circ \pi)
\]
for any twisting $\a: X\to K(\ZZ, 3)$.

Choose a good   open cover $\{V_i\}$ of $X$ such that $E_i = E|_{V_i}$ is trivialized by an isomorphism
\[
 E_i \cong V_i \times \RR^{2n}.
 \]
 This defines a canonical $Spin^c$ structure $\s_i$  on each $E_i$.  Denote by $U_{E_i}^{\s_i}$ the
 associated Thom class of $(E_i, \s_i)$. Then we have
 \[
 U_{E_j}^{\s_j} = U_{E_i}^{\s_i} \pi_{ij}^*([L_{ij}]) \in K_{cv}^0(E_{ij})
 \]
 where $L_{ij}$ is the difference line bundle over $V_{ij} =V_i\cap V_j$
  defined by  $\s_j = \s_i \otimes L_{ij}$   on $E_{ij} = E|_{V_{ij} }$.  Recall that these local line bundles
  $\{L_{ij}\}$ define a bundle gerbe \cite{Mur} associated to the twisting $ o_E = W_3\circ \tau: X\to K(\ZZ, 3)$ and
   a locally trivializing cover $\{V_i\}$ . By the definition of twisted  $K$-theory, $\{U_{E_i}^{\s_i}\}$ defines
   a twisted  $K$-theory class of $E$ with compact vertical supports and   twisting given by 
   \[
   \pi^* (o_E ) = o_E \circ \pi: E \to K(\ZZ, 3).
   \]
   We denote this canonical twisted  $K$-theory class by
   \[
   U_E \in K^0_{cv}(E,  \pi^* (o_E)  ).
   \]
   When $X$ is compact, then $ U_E \in K^0(E,  \pi^* (o_E )  )$.   One can easily show that
   the Thom class $U_E$ does not depend on the choice of the trivializing cover.

   Now we can list the properties of the Thom class in twisted  $K$-theory.
   
   \begin{proposition} \begin{enumerate}
\item If $E$ is equipped with a $Spin^c$ structure $\s$, then $\s$
defines a canonical isomorphism 
\[
\phi_\s:  K^0_{cv}(E, \pi^*(o_E ) ) \longrightarrow K^0_{cv}(E)
\]
such that $\phi_\s (U_E) = U_E^\s$.
\item Let $f: X\to Y$ be a continuous map and $E$ be  an oriented  vector bundle  of even rank over $Y$, then
\[
U_{f^*E} = f^*(U_E).
\]
\item Let $ E_1$ and $ E_2 $ be two oriented  vector bundles of even rank over $X$, $p_1$ and $p_2$ be the projections from $E_1\oplus E_2$ to $E_1$ and $E_2$ respectively, that is, we have the diagram
\[
\xymatrix{
E_1\oplus E_2 \ar[r]^{p_2} \ar[d]^{p_1} & E_2 \ar[d]^{\pi_2} \\
E_1 \ar[r]^{\pi_1} & X,}
\]
 then 
\[  U_{E_1\oplus E_2}  = 
p^*_1( U_{E_1}  )  \cdot p_2^*(U_{E_2} ).
\]
   
 \item Let $\pi: E \to X$ be an  oriented  vector bundle of even rank over a compact space $X$, the Thom isomorphism in twisted  $K$-theory (\cite{CW1}) 
\[
\Phi^K_E:  K^0(X, \a + o_E) \cong K^0(E, \pi^* (\a) )
\]
is given by  $a\mapsto \pi^*(a) \cdot  U_E $.  Moreover, the push-forward map  in twisted  $K$-theory (\cite{CW1}) 
\[
\pi_!:  K^0(E, \pi^* (\a) ) \longrightarrow K^0(X, \a + o_E) 
\]
satisfies $\pi_!( \pi^*(a) \cdot  U_E ) =a$.
\end{enumerate}
\end{proposition}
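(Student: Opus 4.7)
The plan is to verify each of the four statements by reducing it to the corresponding assertion for the ordinary (untwisted) $K$-theoretic Thom class on a good cover $\{V_i\}$ of $X$ that trivialises $E$ (and, for (2) and (3), is chosen compatibly). All four statements follow by gluing the local data, using only the defining cocycle description $U_E = \{U_{E_i}^{\s_i}\}$ together with the transition relation $U_{E_j}^{\s_j} = U_{E_i}^{\s_i}\cdot \pi_{ij}^*[L_{ij}]$.

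For (1), a global $Spin^c$ structure $\s$ on $E$ is precisely a null-homotopy of $W_3\circ \tau = o_E$, and hence, via Proposition \ref{Cliff} together with the homotopy axiom (I) from Section 2.3, it determines a canonical isomorphism $\phi_\s: K^0_{cv}(E, \pi^*o_E) \to K^0_{cv}(E)$. Concretely, on each $V_i$ the restriction $\s|_{V_i}$ differs from the local $\s_i$ by a line bundle $M_i$ with $M_j = M_i\otimes L_{ij}^{-1}$ on overlaps, so $\{U_{E_i}^{\s_i}\cdot \pi_i^*[M_i]\}$ glues to a globally defined class in $K^0_{cv}(E)$ which, by the functoriality of the $Spin^c$ Thom class under changes of $Spin^c$ structure (second bullet of the Remark), is exactly $U_E^\s$. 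Thus $\phi_\s(U_E) = U_E^\s$.

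For (2), choose a good cover $\{V_i\}$ of $Y$ trivialising $E$; then $\{f^{-1}(V_i)\}$ is an open cover of $X$ (which we refine to a good cover if necessary) trivialising $f^*E$, and the pullback sends $\s_i$ to its pullback $Spin^c$ structure on $(f^*E)|_{f^{-1}(V_i)}$ with local difference line bundles $f^*L_{ij}$. Naturality of the ordinary Thom class gives $U_{(f^*E)_i}^{f^*\s_i} = f^*(U_{E_i}^{\s_i})$, and these local identities assemble into the stated equality $U_{f^*E} = f^*(U_E)$. For (3), take a common good trivialising cover $\{V_i\}$ for both $E_1$ and $E_2$, so that $E_1\oplus E_2$ is locally trivial with $Spin^c$ structure $\s_{1,i}\oplus \s_{2,i}$. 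The classical multiplicativity (third bullet of the Remark) gives $U^{\s_{1,i}\oplus \s_{2,i}}_{(E_1\oplus E_2)_i} = p_1^*U_{E_{1,i}}^{\s_{1,i}}\cdot p_2^*U_{E_{2,i}}^{\s_{2,i}}$, and the transition line bundles for $E_1\oplus E_2$ are the tensor products of those for $E_1$ and $E_2$, matching the cocycle defining $p_1^*U_{E_1}\cdot p_2^*U_{E_2}$ under the multiplicative structure (IV).

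Part (4) then follows by combining (1)--(3) with the Thom isomorphism of property (V). Indeed, localising on a trivialising cover, multiplication by $U_E$ reduces to multiplication by $U_{E_i}^{\s_i}$, which is the ordinary $K$-theoretic Thom isomorphism on $V_i$; the patching of these local isomorphisms matches, by construction, the twisted Thom isomorphism of (V) and the push-forward $\pi_!$ of (VI) is its inverse, so $\pi_!(\pi^*(a)\cdot U_E) = a$.  The only step that requires some care is (1): one has to check that the isomorphism $\phi_\s$ defined by the null-homotopy of $o_E$ agrees, at the cocycle level, with the line-bundle correction $\{M_i\}$, and that this is independent of the choice of good cover --- this is the main obstacle, and is handled by the uniqueness (up to action of $H^2(X,\ZZ)$) of the isomorphism $\eta_*$ in (\ref{iso:eta}) together with the second-bullet functoriality of $U_E^\s$ under tensoring by a line bundle.
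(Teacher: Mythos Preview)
Your arguments for parts (1)--(3) are essentially identical to the paper's: both work with the local description $U_E=\{U_{E_i}^{\s_i}\}$ over a trivialising good cover, identify the line bundles $L_i$ (your $M_i$) relating $\s|_{V_i}$ to $\s_i$, and verify the required compatibilities on overlaps by appealing to the classical properties of the $Spin^c$ Thom class listed in the preceding Remark.

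For part (4) your route differs from the paper's. You argue by localisation: over each $V_i$ multiplication by $U_E$ becomes the ordinary Thom isomorphism, and you assert that these patch to give the twisted Thom isomorphism of property~(V). The paper instead proceeds globally: it chooses an even-rank complement $F$ with $E\oplus F\cong X\times\RR^{2m}$, factors $\pi$ as $E\xrightarrow{i}X\times\RR^{2m}\xrightarrow{p}X$, and uses the construction of $\pi_!$ in \cite{CW1} together with part~(3) to compute $\pi_!(U_E)=p_!\bigl(i_!(U_E)\bigr)=p_!(U_{E\oplus F})=1$; since $\pi_!$ is a $K^0(X,\a+o_E)$-module map this gives $\pi_!(\pi^*(a)\cdot U_E)=a$, and the Thom isomorphism, being the inverse of $\pi_!$, is therefore $a\mapsto\pi^*(a)\cdot U_E$.

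Your local-to-global approach is not wrong in spirit, but the step ``the patching of these local isomorphisms matches, by construction, the twisted Thom isomorphism of~(V)'' is doing all the work and is only justified if you actually unpack the construction of the Thom isomorphism in \cite{CW1} and verify it is built from the local Thom classes; property~(V) as stated here only asserts existence. The paper's stabilisation trick sidesteps this by reducing the computation of $\pi_!(U_E)$ to the trivial-bundle case, where it is immediate, and then invoking only the module-homomorphism property of $\pi_!$ and its role as inverse to $\Phi^K_E$---both cited facts from \cite{CW1} that require no further unpacking.
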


\begin{proof}
\begin{enumerate}
\item The $Spin^c$ structure $\s$ defines canonical isomorphism 
\[
\phi_\s:  K^0_{cv}(E, \pi^*(o_E) ) \longrightarrow K^0_{cv}(E)
\]
as follows.  Given a trivializing cover $\{V_i\}$ and the canonical $Spin^c$ structure $\s_i$ on $E_i = E|_{V_i}$,   we have 
\[
\s|_{E_i} = \s_i \otimes L_i
\]
for a complex line bundle $\pi_i: L_i \to V_i$. This implies $U_{E_i}^{\s} = U_{E_i}^{\s_i} \pi^*([L_i])$. Note that $L_{ij} = L_i  \otimes L_j^*$. 

Any twisted K-class  $a$ in  $K^0_{cv}(E, \pi^*(o_E ) )  $ is  given by 
a local K-class $a_i$ with
compact vertical support such that $a_j = a_i \pi_{ij}^*([L_{ij}])$,  then 
\[
a_i \pi_i^*([L_i]) = a_j \pi^*([L_j]) 
\]
in $K^0_{cv}(E|_{V_{ij}})$.  This  defines the homomorphism $\phi$, which is obviously an isomorphism
sending $U_E$ to $U_E^\s$.
\item Choose a good open cover $\{V_i\}$ of $Y$. 
By definition, the Thom class $U_E$ is defined by $\{U_{E_i}^{\s_i} \}$ with
\[
U_{E_j}^{\s_j} = U_{E_i}^{\s_i}\ \pi^*_{ij}([L_{ij}]).
\]
Then $\{f^{-1}(V_i)\}$ is an open cover of $X$, and $(f^*E)|_{f^{-1}(V_i)} = f^*E_i$
is trivialized with the canonical $Spin^c$ structure $f^*\s_i$,  thus
\[
U_{f^*E_i}^{f^*\s_i}  = f^* U_{E_i}^{\s_i}.
\] 
This  gives  $U_{f^*E} = f^*(U_E).$

\item The proof is similar to the proof of (2).
\item   From (\cite{CW1}), we know that the Thom isomorphism and the 
push-forward map  in twisted  $K$-theory  are both homomorphisms of $K^0(X, \a)$-modules. There exists
an oriented real vector bundle $F$  of even rank  such that 
\[
E\oplus F  = X\times \RR^{2m}
\]
for some $m\in \NN$.    Thus, we have 
\[
\xymatrix{
E\ar[r]^i\ar[dr]^\pi & X\times \RR^{2m} \ar[d]^p\\
& X}
\]
From the construction of the push-forward map in (\cite{CW1}), we see that
\[
\pi_! (U_E ) = p_! \circ i_! (U_E) = p_! (U_{E\oplus F}) =1.
\]
As the Thom isomorphism and the 
push-forward map  in twisted  $K$-theory  are both homomorphisms of $K^0(X, \a)$-modules, we get
$\pi_!( \pi^*(a) \cdot  U_E ) =a$.  

Note that the Thom isomorphism is inverse to the push-forward map  $\pi_!$, hence, the Thom isomorphism in twisted  $K$-theory   
\[
K^0(X, \a + o_E) \cong K^0(E, \pi^* (\a) )
\]
is given by  $a\mapsto \pi^*(a) \cdot  U_E $. 
\end{enumerate}

\end{proof}

\subsection{Twisted Riemann-Roch}
By an application of the Thom class and Thom isomorphism  in twisted  $K$-theory, we will now give a direct proof
of a special case of the Riemann-Roch theorem for twisted  $K$-theory. With  some notational changes, the   argument can be applied to establish 
the general Riemann-Roch theorem in twisted  $K$-theory.  Denote by $o_X$ and $o_Y$ 
the orientation twistings  associated to the tangent bundles $\pi_X: TX\to X$ and
$\pi_Y: TY \to Y$ respectively.

\begin{theorem} \label{RR}
Given  a   smooth map $f: X\to Y$ between oriented manifolds,  assume that $\dim Y- \dim X = 0\mod 2$. 
 Then  the Riemann-Roch formula is given by 
   \[
  Ch_{\check{w}_2(Y)} \bigl(f_{!}^{K}(a)\bigr) \hat{A} (Y)   =  f_*^H\bigl(Ch_{\check{w}_2(X) } (a)\hat{A} (X)\bigr).
\]
for any $a\in K^0 (X,    o_X ) $.  Here $\hat{A}(X)$ and $\hat{A}(Y)$ are  the A-hat classes of  $X$ and $Y$ respectively.  
\end{theorem}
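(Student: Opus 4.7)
My plan is to follow the classical Atiyah--Hirzebruch strategy of factoring $f$ into an embedding followed by a projection, establishing Riemann--Roch separately in each case, and combining the results via the functoriality of $f^K_!$ (Property (VI) of twisted $K$-theory), of $f^H_*$, and the multiplicativity of $\hat{A}$. Concretely, choose an embedding $\iota:X\hookrightarrow\RR^{2N}$ with $N$ large, so that $f$ factors as
\[
X \stackrel{j}{\hookrightarrow} Y\times\RR^{2N} \stackrel{p}{\longrightarrow} Y,
\]
where $j=(f,\iota)$ is a smooth embedding with even-rank normal bundle $\nu$, and $p$ is the trivial projection.

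The projection case is essentially formal. The vertical tangent bundle of $p$ is trivial, so $\hat{A}(Y\times\RR^{2N}) = p^*\hat{A}(Y)$; the twisted push-forward $p^K_!$ is a composition of suspension isomorphisms, and $Ch_{\check{w}_2(\,\cdot\,)}$ commutes with cohomological fiber integration over the trivially oriented Euclidean fiber. This yields the Riemann--Roch identity for $p$ at once.

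The embedding case carries the essential content. Using a tubular neighborhood, identify a neighborhood of $j(X)$ in $Y\times\RR^{2N}$ with the total space of $\nu$, so that $j^K_!(a) = k_*\bigl(\pi^*(a)\cdot U_\nu\bigr)$ where $\pi:\nu\to X$, $k$ is extension by zero from the tubular neighborhood, and $U_\nu\in K^0_{cv}(\nu,\pi^*o_\nu)$ is the Thom class of Section 5.1. Because the twisted Chern character is natural under $\pi^*$ and $k_*$ and behaves multiplicatively on products of twistings (Proposition \ref{ring:homo}), the embedding case collapses to the single \emph{twisted Thom character identity}
\[
Ch_{\pi^*\check{w}_2(\nu)}(U_\nu) \;=\; \Phi^H_\nu\bigl(\hat{A}(\nu)^{-1}\bigr),
\]
where $\Phi^H_\nu$ denotes the ordinary cohomology Thom isomorphism of $\nu$. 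Granting this, the projection formula for $k_*$ together with the multiplicativity $\hat{A}(Y\times\RR^{2N})|_X = \hat{A}(X)\,\hat{A}(\nu)$ yields the desired Riemann--Roch formula for $j$.

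The twisted Thom character identity is the main obstacle, and I would establish it locally. On each set $V_i$ of a good open cover of $X$ that trivializes $\nu$, the orientation twisting $o_\nu$ admits a canonical local null-homotopy, producing a local $Spin^c$ structure $\sigma_i$ on $\nu|_{V_i}$ for which $U_\nu|_{V_i} = U^{\sigma_i}_{\nu|_{V_i}}$ and for which the classical Atiyah--Hirzebruch identity $ch(U^{\sigma_i}_{\nu|_{V_i}}) = \Phi^H(\hat{A}(\nu|_{V_i})^{-1})$ holds with no determinant-line correction, because $\sigma_i$ is the standard trivialization. The \v{C}ech $1$-cochain $\{L_{ij}\}$ measuring the failure of these local $Spin^c$ structures to glue is precisely the cocycle of the $Spin^c$ bundle gerbe $\cG_{W_3(\nu)}$ underlying the differential twisting $\pi^*\check{w}_2(\nu) = (\cG_{W_3(\nu)},\theta,0)$, equipped with its flat connection and zero curving; in particular the three-form $H$ vanishes, so the local character forms patch together as a single globally defined twisted Chern character class. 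Verifying this patching rigorously against the Deligne cocycle description (\ref{deligne:cocycle}) is the technical heart of the argument.
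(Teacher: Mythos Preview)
Your approach is correct but takes a genuinely different route from the paper. The paper does not factor $f$ as an embedding followed by a projection; instead it passes to tangent bundles, writing
\[
f_!^K = (\Phi_{TY}^K)^{-1}\circ (df)_!^K\circ \Phi_{TX}^K
\]
and analysing the Chern-character defect in each of the three resulting squares. The outer squares are handled by the twisted Thom identity $(\Phi^H_{TX})^{-1}\bigl(Ch_{\pi_X^*\check w_2(X)}(U_{TX})\bigr)=\hat A(X)^{-1}$ (proved by pulling back along the zero section and dividing by the Euler class), while the middle square is disposed of by invoking the \emph{ordinary} Riemann--Roch theorem for the K-oriented map $df:TX\to TY$, using that $TX$ and $TY$ are canonically $Spin^c$ with $\Td(T(TX))=\pi_X^*\hat A^2(X)$.

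Your embedding-plus-projection factorisation is the more classical Atiyah--Hirzebruch route and is arguably more self-contained: the projection step is genuinely trivial, and the embedding step reduces to exactly the same twisted Thom character identity $Ch_{\pi^*\check w_2(\nu)}(U_\nu)=\Phi^H_\nu(\hat A(\nu)^{-1})$, which you propose to verify by the local-patching argument rather than the zero-section/Euler-class trick. What the paper's route buys is that the nontrivial analytic content is outsourced to the untwisted Riemann--Roch for $df$, so only the defect of the Thom isomorphism needs a new computation; what your route buys is that you never leave the base manifolds and never appeal to the untwisted theorem as a black box. Both hinge on the same key lemma, and your local verification of it (using that the canonical $\sigma_i$ has trivial determinant line, so no $e^{c_1/2}$ correction appears and the local forms glue under the flat gerbe connection with zero curving) is sound.
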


\begin{proof} For simplicity, assume that both $X$ and $Y$ are of even dimension, say $2m$ and $2n$ respectively, equipped with a Riemannian metric. 
We will consider  Chern character defects in each of the following three squares 
\ba\label{square}\ \ \ 
\xymatrix{
K^0(X, o_X) \ar[r]^{\Phi_{TX}^K}_\cong \ar[d]_{Ch_{\check{w}_2(X)}} & K^0_c(TX) \ar[r]^{(df)^K_!}\ar[d]^{Ch}& K^0_c(TY) \ar[r]_\cong^{(\Phi_{TY}^K)^{-1}}  \ar[d]^{Ch}  &K^0(Y, o_Y) \ar[d]^{Ch_{\check{w}_2(Y)}} \\
H^{ev}(X) \ar[r]^{\Phi_{TX}^H}_\cong  & H^{ev}_c(TX)  \ar[r]^{(df)^H_*} & H^{ev}_c(TY) \ar[r]_\cong^{(\Phi_{TY}^H)^{-1}}  & H^{ev}(Y)
}
\na
where $\Phi_{TX}^K$ and $\Phi_{TY}^K$ are the Thom isomorphisms in twisted  $K$-theory for $TX$  and 
$TY$,  
$\Phi_{TX}^H$ and $\Phi_{TY}^H$ are the cohomology Thom isomorphisms for $TX$  and 
$TY$.  Then we have
\begin{enumerate}
\item The push-forward map in twisted  $K$-theory  as established in \cite{CW1}
\[
f_!^K:  K^0(X, o_X) \to K^0(Y, o_Y)
\]
agrees with 
\[
  (\Phi_{TY}^K)^{-1} \circ (df)^K_! \circ \Phi_{TX}^K.
\]
\item The push-forward map in cohomology theory  $f_*^H: H^{ev}(X) \to H^{ev}(Y)$  is given by 
\[
f_*^H = (\Phi_{TY}^H)^{-1} \circ (df)^H_* \circ \Phi_{TX}^H.
\]

\end{enumerate}

 Denote by $U_{TX}^H$ and $U_{TY}^H$ the cohomological Thom classes  for $TX$  and 
$TY$. Then  under the pull-back of the zero section, $0_X^*(U_{TX}^H) = e(TX)$ and $0_Y^*(U_{TY}^H) = e(TY)$  are the Euler classes for $TX$  and 
$TY$ respectively. 

  Let the Pontrjagin classes of $\pi_X: TX \to X$ be 
symmetric polynomials in $x_1^2, \cdots, x_m^2$, then 
\[
\hat{A}(X) = \prod_{k=1}^m \dfrac{x_k/2}{\sinh (x_k/2)}.  
\]
  The Chern character defect for  the left square in  (\ref{square}) is given by
\ba\label{defect:1}
Ch \bigl( \Phi^K_{TX} (a) \bigr) = \Phi_{TX}^H \bigl( Ch_{  \check{w}_2(X)}(a) \hat{A}^{-1}(X)\bigr) 
\na
for any $a \in K^0(X, o_X)$.  Here $\hat{A}(X)$ is the A-hat class of $TX$.  

To prove (\ref{defect:1}), note that 
\[\begin{array}{llll}
&&  (\Phi^H_{TX})^{-1} Ch \bigl( \Phi^K_{TX} (a) \bigr) & \\[2mm]
&=&  (\Phi^H_{TX})^{-1} \bigl( Ch (\pi_X^*(a)  \cdot U_{TX})  \bigr) 
 & \text{Apply Prop. (\ref{ring:homo})} \\[2mm]
&=&  (\Phi^H_{TX})^{-1} \bigl( Ch_{\pi_X^* \check{w}_2(X)}  (\pi_X^*(a) )  \cdot  Ch_{\pi_X^* \check{w}_2(X)}  (U_{TX})\bigr) &  \\[2mm] 
&=&  (\Phi^H_{TX})^{-1}  \bigl(   \pi^*_X (Ch_{ \check{w}_2(X)}   (a) )  \cdot  Ch_{\pi_X^* \check{w}_2(X)}  (U_{TX})\bigr)
&\text{Note that $(\Phi^H_{TX})^{-1}= (\pi_X )_*$. }  \\[2mm] 
&=& Ch_{  w_2(X)}    (a)  (\Phi^H_{TX})^{-1}  \bigl( Ch_{\pi_X^* \check{w}_2(X)}  (U_{TX})\bigr) &\text{By the projection formula.}
\end{array}
\]
So the  Chern character defect for  the  square  
\[\xymatrix{
K^0(X, o_X) \ar[r]^{\Phi_{TX}^K}_\cong \ar[d]_{Ch_{\check{w}_2(X)}} & K^0_c(TX)  \ar[d]^{Ch}\\
H^{ev}(X) \ar[r]^{\Phi_{TX}^H}_\cong  & H^{ev}_c(TX) }
\]
is given by 
\[
\cD(X) =  (\Phi^H_{TX})^{-1}    \bigl( Ch_{\pi_X^* \check{w}_2(X)}  (U_{TX})\bigr) \in H^{ev}(X).
\]
   From the cohomology Thom isomorphism, we have
\[
0_X^* \circ \Phi^H_{TX} (\cD(X) ) = \cD(X) e(TX),
\]
under the pull-back of the
 zero section $0_X$ of the tangent bundle $TX$. 

Therefore, we have
\[
\cD(X) = \dfrac{ 0_X^*  \bigl( Ch_{\pi_X^* \check{w}_2(X)}  (U_{TX})\bigr) }{e(TX)}.
\]
By the construction of the Thom class $U_{TX}$, under the pull-back of the
 zero section $0_X$ of $TX$, $0_X^* (U_{TX}) $ is a twisted K-class in
 $K^0(X, o_X)$ and 
\[\begin{array}{lll}
&& 0_X^* Ch_{\pi_X^* \check{w}_2(X)} (U_{TX}) \\[2mm]
& = & Ch_{  \check{w}_2(X)} ( 0_X^* (U_{TX})    )\\[2mm]
&=&  \prod_{k=1}^m (e^{x_k/2} - e^{-x_k/2}). \end{array}
\]
Thus, (\ref{defect:1}) follows from 
\[
\cD(X) = \prod_{k=1}^m \dfrac{ (e^{x_k/2} - e^{-x_k/2})}{ x_k} =   \hat{A}^{-1}(X).
\]
This implies that the following diagram commutes
\ba\label{diagram:1}
\xymatrix{
K^0(X, o_X) \ar[r]^{\Phi_{TX}^K}_\cong \ar[d]_{Ch_{\check{w}_2(X)} (-) \cdot \hat{A}(X) } & K^0_c(TX)  \ar[d]^{Ch (-)\cdot \pi^*_X  \hat{A}^2(X)}\\
H^{ev}(X) \ar[r]^{\Phi_{TX}^H}_\cong  & H^{ev}_c(TX) .}
\na

Similarly,   the  Chern character defect  in
 \[\xymatrix{
K^0(Y, o_Y) \ar[r]^{\Phi_{TY}^K}_\cong \ar[d]_{Ch_{\check{w}_2(Y)}} & K^0_c(TY)  \ar[d]^{Ch}\\
H^{ev}(Y) \ar[r]^{\Phi_{TY}^H}_\cong  & H^{ev}_c(TY) }
\]
is given by  
\[
Ch \bigl( \Phi^K_{TY} (a) \bigr) = \Phi_{TY}^H \bigl( Ch_{ \check{w}_2(Y)}(a) \hat{A}^{-1}(Y)\bigr) 
\]
for any $a\in K^0(Y, o_Y) $. 
This implies that the Chern character defect  for  the right square in  (\ref{square})  is given by
\ba\label{defect:3}
Ch_{\check{w}_2(Y)}\big( (\Phi^K_{TY})^{-1} (c) \bigr) \cdot \hat{A}^{-1}(Y)   =  (\Phi_{TY}^H)^{-1} \bigl(  Ch(c) \bigr) 
\na
for any $c\in K^0_c(TY) $.  Hence, we have the following commutative diagram
\ba\label{diagram:2}
\xymatrix{
K^0_c(TY)  \ar[d]_{Ch (-)\cdot \pi^*_Y  \hat{A}^2(Y)}   \ar[r]^{(\Phi_{TY}^K)^{-1}}_\cong & K^0(Y, o_Y)  \ar[d]^{Ch_{\check{w}_2(Y)}  (-)\cdot    \hat{A} (Y)}\\
H^{ev}_c(TY) \ar[r]^{(\Phi_{TY}^H)^{-1}}_\cong  & H^{ev} (Y) .}
\na

The Chern character for the middle  square in  (\ref{square})  follows from the Riemann-Roch theorem 
in ordinary  $K$-theory for the K-oriented map $df: TX  \to TY$ with the orientation given by
 canonical $Spin^c$  manifolds $TX$ and $TY$. Note that the Todd
classes of $Spin^c$ manifolds of $TX$ and $TY$ are given by  $\pi_X^*(\hat{A}^2(X))$
and $\pi_Y^*(\hat{A}^2(Y))$ respectively. This is due to two facts, that 
$T(TX) \cong \pi_X^*(TX\otimes \CC)$ and 
that $Td(TX\otimes \CC) = \hat{A}^2(X)$. So 
we have 
\ba\label{defect:2}
Ch\bigl((df)_!^K (a) \bigr) \cdot \pi_Y^* \bigl(\hat{A}^2(Y)\bigr) =(df)_*^H \bigl(Ch(a) \pi_X^* (\hat{A}^2(X)) \bigr)
\na
for any $a\in K^0_c(TX)$.   Hence, the following diagram commutes
\ba\label{diagram:3}
\xymatrix{
K^0_c(TX)  \ar[d]_{Ch (-)\cdot \pi^*_X  \hat{A}^2(X)} \ar[r]^{(df_!)^K}  & K^0_c(TY)  \ar[d]^{Ch (-)\cdot \pi^*_Y  \hat{A}^2(Y)}  \\
H^{ev}_c(TX)  \ar[r]^{(df)^H_*} & H^{ev}_c(TY).}
\na

Putting (\ref{diagram:1}), (\ref{diagram:2}) and (\ref{diagram:3}) together, we get the following 
commutative diagram
\[
\xymatrix{
K^0(X, o_X) \ar[r]^{f_!^K}  \ar[d]_{Ch_{\check{w}_2(X)} (-) \cdot \hat{A}(X) } &
K^0(Y, o_Y)  \ar[d]^{Ch_{\check{w}_2(Y)}  (-)\cdot    \hat{A} (Y)}\\
H^{ev}(X) \ar[r]^{f_*^H}   &  H^{ev} (Y) }
\]
which  leads to 
 \[
  Ch_{\check{w}_2(Y)} \bigl(f_{!}^{K}(a)\bigr) \hat{A} (Y)   =  f_*^H\bigl(Ch_{\check{w}_2(X) } (a)\hat{A} (X)\bigr)
\]
for any $a\in K^0 (X,    o_X ) $. This completes the proof of the Riemann-Roch theorem in twisted  $K$-theory.
\end{proof}

 With  some notational changes, the above  argument can be applied to establish 
the general Riemann-Roch theorem in twisted  $K$-theory.  
Let   $f : X \to Y$ a smooth map  between oriented manifolds with  $\dim Y- \dim X = 0\mod 2$. 
  Let $\check{\a} =(\cG_\a, \theta, \omega)$ 
be a differential twisting which lifts $\a: Y \to K(\ZZ, 3)$,  $f^*(\check{\a})$ is the pull-back 
differential twisting which lifts $\a\circ f : X \to K(\ZZ, 3)$.  Then we have
the  following Riemann-Roch formula
 \ba\label{RR:gen}
 Ch_{\check{\a}}  \bigl(f_{!}^{K}(a)\bigr) \hat{A} (Y)   =  
 f_*^H\bigl(Ch_{ f^* \check{\a} + \check{w}_2(Y)+f^*\check{w}_2(Y) } (a)\hat{A} (X)\bigr)
\na
for any $a\in K^0 (X,  \a\circ f +    o_X + f^* (o_Y) )$. 
  In particular, 
 we have  the following Riemann-Roch formula for a trivial twisting $\a: Y\to K(\ZZ, 3)$
 \ba\label{RR:AH}
  Ch \bigl(f_{!}^{K}(a)\bigr) \hat{A} (Y)   =  f_*^H\bigl(Ch_{\check{w}_2(Y)+f^*\check{w}_2(Y) } (a)\hat{A} (X)\bigr)
\na
for any $a\in K^0 (X,    o_X + f^* (o_Y) )$. 

When $f$ is K-oriented, and equipped  with a  $Spin^c$ structure whose determinant bundle is $L$, there is a canonical isomorphism 
\[
\Psi:  K^0(X) \cong K^0(X,    o_X + f^* (o_Y) )
\]
such that $Ch_{\check{w}_2(Y)+f^*\check{w}_2(Y) } (\Psi(a)) = e^{c_1(L)/2} Ch (a)$ for any $a\in K^0(X)$.  
Then the Riemann-Roch  formula  (\ref{RR:AH}) agrees with the Riemann-Roch  formula for K-oriented
maps as established in \cite{AH}.

\section{The twisted index formula}

In this Section, we establish the index pairing for a closed smooth manifold with a twisting $\a$
\[
K^{ev/odd}(X, \a)  \times \hKa_{ev/odd} (X, \a) \longrightarrow \ZZ
\]
in terms of the local index formula for twisted geometric cycles. 

\begin{theorem}\label{index}
Let $X$ be a smooth closed  manifold 
with a twisting $\a: X\to K(\ZZ, 3)$.  The index pairing
\[
K^0(X, \a) \times K_0(X, \a) \longrightarrow \ZZ
\]
is given by
\[  
     \<  \xi, (M, \i, \nu, \eta, [E]) \>  \\[2mm]
    = \displaystyle{  \int_M}  Ch_{\check{w}_2(M)} \bigl(\eta_* (\i^*\xi\otimes E) \bigr) \hat{A}(M) 
 \]
where $\xi\in K^0(X, \a)$, and the geometric cycle 
$
(M, \i, \nu, \eta, [E])
$
 defines a  twisted $K$-homology class on $(X, \a)$.  Here 
\[
 \eta_* : K^*(M, \i^*\a) \cong K^*(M, o_M) 
 \]
 is an  isomorphism, and $Ch_{\check{w}_2(M)}$ is the 
  Chern character on  $K^0(M,o_M)$.
  \end{theorem}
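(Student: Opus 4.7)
The plan is to interpret the index pairing as the push-forward in twisted $K$-theory along $p : M \to \mathrm{pt}$ of an auxiliary element built from $\xi$ and $E$, and then invoke the twisted Riemann-Roch formula of Theorem \ref{RR} to rewrite this push-forward as a cohomological integral over $M$.

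First I would unpack the pairing through the assembly map $\mu$ of Theorem \ref{geo:ana}. By construction,
\[
\mu(M, \i, \nu, \eta, [E]) = \i_* \circ \eta_* \circ I_* \bigl([M] \cap [E]\bigr) \in \hKa_{ev}(X, \a),
\]
so the pairing of $\xi \in K^0(X, \a)$ with this class equals the pairing of $\i^*\xi \otimes [E] \in K^0(M, \i^*\a)$ with the fundamental class of $M$ after the twistings are identified. Using the $K^0(X)$-module structure of Property (IV), the projection formula for $\i_*$, and the functoriality of Kasparov's cap product under the Morita equivalence induced by the homotopy $\eta$, one rewrites
\[
\langle \xi,\, [M, \i, \nu, \eta, E] \rangle = p^K_! \bigl( \eta_*(\i^*\xi \otimes [E]) \bigr),
\]
where $p : M \to \mathrm{pt}$ and $\eta_*(\i^*\xi \otimes [E]) \in K^0(M, o_M)$; here twisted Poincar\'e duality (Theorem \ref{PD:twisted}) converts the cap product with $[M]$ into the twisted push-forward along $p$, and Proposition \ref{Cliff} identifies $K^0(M, o_M)$ with the Clifford-module $K$-theory, so that $p^K_!$ is exactly the analytic index of the associated Dirac-type operator.

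Second, I apply Theorem \ref{RR} to the map $p : M \to \mathrm{pt}$ with $a = \eta_*(\i^*\xi \otimes [E])$. Since $\hat{A}(\mathrm{pt}) = 1$, the twisted Chern character $Ch_{\check{w}_2(\mathrm{pt})}$ is the identity on $\ZZ = K^0(\mathrm{pt})$, and $p^H_*$ is integration over $M$, Theorem \ref{RR} specialises to
\[
p^K_!(a) = \int_M Ch_{\check{w}_2(M)}(a)\, \hat{A}(M).
\]
Substituting the expression for $a$ gives the claimed formula.

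The main obstacle is the bookkeeping in the first step: one must verify that the index pairing between twisted $K$-theory and twisted $K$-homology is compatible with each of the ingredients of the assembly map ($\i_*$, $\eta_*$, $I_*$, and cap product with $[M]$), and that under these identifications the $K^0(X)$-module action on $\hKa_*(X, \a)$ matches the Kasparov product on $(M, \i^*\a)$. This reduces to standard Kasparov-theoretic naturality once Theorems \ref{twited:K:top=ana}, \ref{PD:twisted} and \ref{geo:ana} are in hand. After this reduction, the analytic content of the theorem is entirely absorbed into the twisted Riemann-Roch formula for $p : M \to \mathrm{pt}$.
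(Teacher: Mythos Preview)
Your proposal is correct and follows essentially the same route as the paper: unpack the assembly map, use functoriality of the Kasparov pairing under $\i_*$ and $\eta_*$ to reduce $\langle \xi, (M,\i,\nu,\eta,[E])\rangle$ to the push-forward $p^K_!\bigl(\eta_*(\i^*\xi\otimes [E])\bigr)$ along $p:M\to\mathrm{pt}$, and then apply the twisted Riemann--Roch Theorem~\ref{RR} to obtain the integral over $M$. The paper's argument is organized identically (with $\eps$ in place of your $p$), so there is no substantive difference in strategy or in the key lemma invoked.
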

    \begin{proof} Recall that the index pairing $K^0(X, \a) \times K_0(X, \a) \longrightarrow \ZZ$ can be defined  by  the internal  Kasparov product  (Cf. \cite{Kas2} and \cite{ConSka}) 
    \[
    KK(\CC, C(X, \cP_\a(\cK)) ) \times KK(C(X, \cP_\a(\cK)), \CC) \longrightarrow KK(\CC, \CC) \cong \ZZ,
    \]
      and is functorial in the sense that if $f: Y\to X$ is a continuous map and
    $Y$ is equipped with a twisting $\a: X\to \ZZ$ then
    \[
    \<  f^*b, a \> = \<b,  f_*(a)\>
    \]
    for any $a\in K_0(Y, f^*\a)$ and $b\in K^0(X, \a)$. 
    
    Note that under the assembly map, the geometric cycle 
$
(M, \i, \nu, \eta, [E])
$
 is mapped to $\i_* \circ \eta_* ([M]\cap [E])$, for $\xi \in K^0(X, \a)$. Hence, we have
 \[
 \begin{array}{lll}
&&   \<  \xi, (M, \i, \nu, \eta, [E]) \>\\[2mm]
&=& \< \xi, \i_* \circ \eta_* ([M]\cap [E])\> \\[2mm]
&=& \<\i^*\xi, \eta_*([M]\cap [E])\> \\[2mm]
&=&   \<\eta_*(\i^*\xi \otimes E), [M]\>.\end{array}
\]
Here $\eta_*(\i^*\xi \otimes E) \in K^0(M, o_M)$ and $[M]$ is the fundamental class 
in $\hKa_{ev}(M, o_M)$ which is Poincar\'e dual to the unit element $\underline{\CC}$ in $K^0(M)$. The index 
pairing between $K^0(M, o_M) \times \hKa_{ev}(M, o_M)$ can be written as
\[
K^0(M, o_M) \times \hKa_{ev}(M, o_M) \to K^0(M, o_M) \times  K^0(M) \to K^0(M, o_M) \to \ZZ
\]
where the first map is given by the Poincar\'e duality $\hKa_{ev}(M, o_M) \cong K^0(M)$, the
middle map is the action of $K^0(M)$ on $K^0(M, o_M)$, and the last map is
the push-forward map of $\eps: M\to pt$. Therefore, we have
\[
 \begin{array}{lll}
&&  \<\eta_*(\i^*\xi \otimes E), [M]\> \\[2mm]
&=& \eps^K_!\bigl(  \eta_*(\i^*\xi \otimes E) \otimes \underline{\CC} \bigr)\\[2mm]
&=&   \eps^K_!\bigl(  \eta_*(\i^*\xi \otimes E)  \bigr).\end{array}
\]
By twisted Riemann-Roch (Theorem \ref{RR}), 
\[
 \begin{array}{lll}
&& \eps_!\bigl(  \eta_*(\i^*\xi \otimes E)  \bigr) \\[2mm]
&=&  \eps^H_* \bigl( Ch_{\check{w}_2(M)} \bigl(\eta_* (\i^*\xi\otimes E) \bigr)  \hat{A}(M) \bigr)  \\[2mm]
&=& \displaystyle{ \int_M}  Ch_{\check{w}_2(M)} \bigl(\eta_* (\i^*\xi\otimes E) \bigr) \hat{A}(M). 
\end{array}
 \]
This completes the proof of the twisted index formula.
 \end{proof}

Note that $\eps: M\to pt$ can be written as $\i\circ\eps_X: M\to X\to pt$.   Applying the Riemann-Roch Theorem \ref{RR}, we can write the above index pairing as
  \[\begin{array}{lll}
    && < (M, \i, \nu, \eta, [E]), \xi > \\[2mm]
     &=& \displaystyle{  \int_M}  Ch_{\check{w}_2(M)} \bigl(\eta_* (\i^*\xi\otimes E) \bigr) \hat{A}(M)\\[4mm]
    &=&  \displaystyle{ \int_X} Ch_{\check{w}_2(X)} \bigl(\ \i_!(E) \otimes \xi \bigr) \hat{A}(X)
    \end{array}
 \]
where $\i_!: K^0(M) \to K^0(X, -\a+o_X)$ is the push-forward map in twisted  $K$-theory, \[
K^0(X, \a ) \times K^0(X, -\a+o_X) \longrightarrow  K^0(X, o_X)  
\]
is the multiplication map (\ref{multi:add}), 
and
 \[
 Ch_{\check{w}_2(X)}: K^0(X, o_X) \longrightarrow H^{ev}(X) 
 \]
 is the twisted Chern character (which agrees with the relative Chern character under the identification $K^0(X, o_X) \cong K^0(X, W_3(X))$, the  $K$-theory of Clifford modules on $X$).

  \section{Mathematical definition of D-branes and D-brane charges}
 
Here we give a mathematical interpretation of D-branes in Type II string theory using the twisted geometric cycles and use the index theorem in the previous Section to compute  charges of D-branes.  
  In Type II superstring theory on a manifold  $X$,  a string worldsheet is an  oriented Riemann surface
  $\Sigma$, mapped into $X$ with $\pa \Sigma$ mapped to an oriented submanifold $M$ (called a D-brane world-volume, a source  of the Ramond-Ramond flux). The theory also has  a  Neveu-Schwarz $B$-field classified by a characteristic class $[\a]\in H^3(X, \ZZ)$.  
  
  In physics,  the D-brane  world volume $M$ carries a gauge field on a complex vector bundle
 (called the Chan-Paton bundle), so a D-brane is given by a
 submanifold $M$ of $X$ with a complex bundle $E$  and a connection $\nabla^E$. This data actually defines a  differential K-class 
 \[
 [(E, \nabla^E)]
 \]
 in differential   $K$-theory  $\check{K}(M)$.

  When the $B$-field is  topologically trivial, that is
  $[\a]=0$,  D-brane charge takes values in ordinary $K$-theory 
$K^0(X )$  or $K^1(X )$  for Type IIB or Type IIA string theory  (as explained in \cite{MM}\cite{Wit1}).   
 For a D-brane  $M$  to define a class in the $K$-theory of $X$, its normal bundle $\nu_M$
 must be endowed with a $Spin^c$ structure.   Equivalently, the embedding 
 \[
 \i: M \longrightarrow  X
 \]
 is K-oriented so that the push-forward map in  $K$-theory (\cite{AH})
 \[
 \i_!^K:  K^0(M) \longrightarrow  K^{ev/odd}(X)
 \]
 is well-defined, (it takes values in even or odd K-groups depending on the dimension of $M$).    So the D-brane charge  of
 $(\i: M \to X, E)$ is  
 \[
 \i_!^K ([E]) \in K^{ev/odd}(X).
 \]
 It was proposed  in \cite{MM} that the cohomological Ramond-Ramond charge of the
 D-brane   is given by
 \[
\cQ_{RR} (\i: M \to X, E) =
 ch(f_!^K (E))\sqrt{\hat{A}(X)}
 \]
 when $X$ is a Spin manifold. 
 A natural  $K$-theoretic interpretation follows from the fact that the modified
 Chern  character isomorphism 
 \[
 K^{ev/odd}_\QQ (X)  \longrightarrow H^{ev/odd}(X, \QQ)
 \]
given by mapping $a\mapsto ch(a)\sqrt{\hat{A}(X)}$ is an isometry with the natural bilinear parings on
 $K^*_\QQ(X) = K^*(X)\otimes \QQ$ and $H^{ev/odd}(X, \QQ)$. Here the pairing on
$K(X)$  is given by the index of the Dirac operator
 \[
 (a, b)_K = \ind (\Dirac_{a\otimes b}) = \int_X ch(a)ch(b)  \hat{A}(X) = \bigl(ch(a) \sqrt{\hat{A}(X)},
(ch(b) \sqrt{\hat{A}(X)}\bigr)_H.
 \]

  When the  $B$-field is not  topologically trivial, that is
  $[\a]\neq 0$,  then $[\a]$   defines a complex line bundle over the loop space $LX$, or a stable isomorphism class of bundle gerbes over $X$. Then in order to have a well-defined worldsheet path integral, Freed and Witten in \cite{FreWit} showed that 
  \ba\label{FW:condition}
  \i^* [\a]  + W_3(\nu_M) =0.
  \na
  When  $ \i^* [\a]  \neq 0$, that means $\i$ is not K-oriented, then
  the push-forward map in  $K$-theory (\cite{AH})
 \[
 \i_!^K:  K^0(M) \longrightarrow  K^*(X)
 \] 
 is {\bf not}  well-defined.    Witten explained in \cite{Wit1} that D-brane
charges should take values in a twisted form of  $K$-theory, as supported further by evidence in \cite{BouMat} and
 \cite{Kap}.
 
 In \cite{Wang}, the mathematical meaning of  (\ref{FW:condition}) was discovered using the notion of  $\a$-twisted $Spin^c$ manifolds for a continuous map 
 \[
 \a: X\longrightarrow K(\ZZ, 3)
 \]
 representing $[\a] \in H^3(X, \ZZ)$. When $X$ is $Spin^c$, the datum to describe a D-brane is exactly a geometric cycle for the twisted $K$-homology $\hKg_{ev/odd}(X, \a)$. By  Poincar\' e duality, we have
 \[
 \hKg_{ev/odd}(X, \a) \cong K^0(X, \a+o_X)
 \]
 with the orientation twisting $o_X: X\to K(\ZZ, 3)$  trivialized by a choice
 of a $Spin^c$ structure. Hence,  
 \[
 K^0(X, \a+o_X) \cong K^0(X, \a).
 \]
 
 For a general manifold $X$, a submanifold $\i: M\to X$ with
 \[
\i^*([\a]) + W_3(\nu_M) =0,
\]
then there is a homotopy commutative diagram
\[
\xymatrix{M \ar[d]_{\i} \ar[r]^{\nu_M} & 
\BSO
 \ar@2{-->}[dl]_{\eta} \ar[d]^{W_3} \\
X \ar[r]_\a  & K(\ZZ, 3), } 
\]
here $\nu_M$ also denotes a  classifying map of the normal bundle, or a classifying map of
the bundle $TM\oplus \i^*TX$. This motivates the following definition (see also \cite{CW2}).

\begin{definition} \label{D:brane} Given  a smooth manifold $X$ with a twisting $\a: X\to K(\ZZ, 3)$, a $B$-field of $(X, \a)$ is a differential twisting  lifting $\a$
\[
 \check{\a} = (\cG_\a, \theta, \omega),
 \]
 which is a (lifting, or local)  bundle gerbe $\cG_\a$ with a connection $\theta$ and a curving $\omega$. 
 The field strength of the $B$-field $(\cG_\a, \theta, \omega)$ is given by the curvature  $H$ of $\check{\a}$. 
 
 A Type  II  (generalized) D-brane in $(X, \a)$ is a complex vector bundle $E$ with a connection 
 $\nabla^E$  over a twisted $Spin^c$  manifold $M$. The twisted $Spin^c$ structure on $M$ is given by the following homotopy commutative diagram together  with a choice of a homotopy $\eta$
 \ba\label{D-brane}
 \xymatrix{M \ar[d]_{\i} \ar[r]^{\nu_\i} & 
\BSO
 \ar@2{-->}[dl]_{\eta} \ar[d]^{W_3} \\
X \ar[r]_\a  & K(\ZZ, 3) } 
\na
where $\nu_\i$ is the classifying map of  $TM\oplus \i^*TX$.
\end{definition}
 
 \begin{remark}
  The twisted $Spin^c$ manifold $M$ in Definition \ref{D:brane} is the D-brane world volume
in Type II string theory.  The twisted $Spin^c$ structure given in (\ref{D-brane}) implies
that D-brane world volume $M \subset X$ 
in Type II string theory satisfies the Freed-Witten anomaly cancellation condition
\[
\i^*[\a] + W_3(\nu_M) = 0.
\]
  In particular, if the $B$-field of $(X, \a)$  is topologically trivial, then the normal bundle of $M\subset X$ is equipped with a $Spin^c$ structure given by (\ref{D-brane}). 
  \end{remark}
 
Given a Type II D-brane $(M, \i, \nu_\i, \eta,  E, \nabla^E)$, the homotopy $\eta$ induces an isomorphism
\[
\eta_*: K^0(M) \to K^0(M, \i^*\a + o_\i).\]
Here $o_\i$ denotes the orientation twisting of the bundle $TM\oplus \i^*TX$.
Note that  \[
\i_!^K: K^0(M, \i^*\a + o_\i) \longrightarrow  K^{en/odd} (X, \a) 
\]
  is the  pushforward map (\ref{push:forward}) in twisted  $K$-theory.  Hence we have  a canonical  element in $ K^{en/odd} (X, \a) $ defined by
\[
\i_!^K(\eta_* ([E])),  
\]
 called the D-brane charge of $(M, \i, \nu_\i, \eta,  E)$.   We remark that a Type II D-brane 
 \[
 (M, \i, \nu_\i, \eta,  E, \nabla^E)
 \]
  defines an element in differential twisted  $K$-theory 
 $\check{K}^{en/odd} (X, \check{\a})$.
 
 From (\ref{D-brane}), we know that $M$ is an $(\a+o_X)$-twisted $Spin^c$ manifold as we have the following homotopy commutative diagram
 \[\xymatrix{M \ar[d]_{\i} \ar[r]^{\nu} & 
\BSO
 \ar@2{-->}[dl]  \ar[d]^{W_3} \\
X \ar[r]_{\a+o_X}  & K(\ZZ, 3)  } 
\]
where $\nu$ is the classifying map of the stable normal bundle of $M$. 
 Together with the following proposition, we conclude that  the Type II D-brane charges,  in the present of a 
 $B$-field  
 \[
 \check{\a} = (\cG_\a, \theta, \omega),
 \]
 are classified by twisted  $K$-theory  $K^0(X, \a)$. 
 
\begin{proposition}  Given a twisting $\a: X\to K(\ZZ, 3)$ on a smooth manifold $X$,  every twisted
K-class in $K^{ev/odd}(X, \a)$ is represented by a geometric cycle
supported on an $(\a + o_X)$-twisted closed $Spin^c$-manifold
$M$ and an ordinary  K-class $[E] \in K^0(M)$.
\end{proposition}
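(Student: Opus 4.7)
The plan is to chain together three equivalences established earlier in this section to reduce the statement to the very definition of a geometric cycle. First I would invoke the twisted Poincar\'e duality of Theorem \ref{PD:twisted}, which gives a canonical isomorphism
\[
\cKt^{ev/odd}(X, \a) \cong \hKt_{ev/odd}(X, \a - o_X).
\]
Since $W_3$ is 2-torsion in $H^3(\BSO, \ZZ)$ (it is the integral Bockstein of $w_2$), the orientation twisting $o_X = W_3\circ \tau$ represents a 2-torsion class, so $-o_X$ and $o_X$ are canonically homotopic as maps $X\to K(\ZZ,3)$. The homotopy is unique up to a further homotopy because $H^2(X,\ZZ)$ acts trivially on a 2-torsion class in the sense relevant here (compare the construction of $\eta_0$ in the proof of the corollary $\hKa_{ev/odd}(X,\a)\cong \hKa_{ev/odd}(X,-\a)$). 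Rewriting the duality accordingly gives
\[
K^{ev/odd}(X, \a) \cong \hKt_{ev/odd}(X, \a + o_X).
\]

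Next I would stack Theorem \ref{top=ana} (topological equals analytic twisted $K$-homology on smooth manifolds) with Theorem \ref{geo:ana} (the assembly map $\mu:\hKg_{ev/odd}\to \hKa_{ev/odd}$ is an isomorphism on smooth manifolds) to conclude
\[
K^{ev/odd}(X, \a) \cong \hKg_{ev/odd}(X, \a + o_X).
\]
By Definition \ref{twisted:geo}, every element on the right-hand side is represented by a geometric cycle $(M, \i, \nu, \eta, [E])$, where $M$ is a closed smooth manifold equipped with an $(\a+o_X)$-twisted $Spin^c$ structure $(\i,\nu,\eta)$ and $[E]\in K^0(M)$ is an ordinary K-theory class. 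Transporting a given class $\xi\in K^{ev/odd}(X,\a)$ through the composite isomorphism yields the desired geometric representative.

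The only genuinely delicate point --- not really an obstacle, but the one step that requires attention --- is making sure that the canonical identification of $-o_X$ with $o_X$ does not introduce an ambiguity in the resulting twisted $Spin^c$ structure on $M$. This is handled exactly as in the proof of the preceding corollary on $\hKa_{ev/odd}(X,\pm\a)$: the homotopy $\eta_0$ between $W_3$ and $\hat{i}\circ W_3$ on $\BSO$ is unique up to homotopy because $H^2(\BSO,\ZZ)=0$, so the induced bijection on equivalence classes of twisted $Spin^c$ structures is canonical. With that noted, all remaining steps are pure invocations of results already proved in Sections 2 and 3, and the proposition follows.
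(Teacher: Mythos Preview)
Your proposal is correct and is precisely the argument the paper's earlier machinery is set up to deliver; the paper itself states the proposition without supplying a separate proof, leaving it as an immediate consequence of Poincar\'e duality (Theorem~\ref{PD:twisted}) together with Theorems~\ref{top=ana} and~\ref{geo:ana}. One small wording issue: your sentence about ``$H^2(X,\ZZ)$ acts trivially on a 2-torsion class'' is not quite the right justification---the canonicity of the identification $-o_X\simeq o_X$ comes from $H^2(\BSO,\ZZ)=0$ (exactly as in the proof of the corollary you cite), not from anything about $H^2(X,\ZZ)$; you say this correctly a line later, so just drop the misleading clause.
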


For completeness, we also give a definition of Type I D-branes (Cf.  \cite{MMS}, \cite{RSV} and  Section 8 in \cite{Wang}).

\begin{definition} Given  a smooth manifold $X$ with a KO-twisting $\a: X\to K(\ZZ_2, 2)$, 
 a Type I  (generalized) D-brane in $(X, \a)$ is a real vector bundle $E$ with a connection 
 $\nabla^E$  over a twisted $Spin$  manifold $M$. The twisted $Spin$ structure on $M$ is given by the following homotopy commutative diagram together  with a choice of a homotopy $\eta$
 \ba\label{D-brane:I}
 \xymatrix{M \ar[d]_{\i} \ar[r]^{\nu_\i} & 
\BSO
 \ar@2{-->}[dl]_{\eta} \ar[d]^{w_2} \\
X \ar[r]_\a  & K(\ZZ_2, 2) } 
\na
where $w_2$ is the classifying map of the principal 
$K(\ZZ_2, 1)$-bundle $\BSpin  \to \BSO$  associated to
the second   Stiefel-Whitney class,    $\eta$ is a homotopy
between $w_2 \circ \nu_\i $ and $\a \circ \i$.   Here  $\nu_\i$ is the classifying map of  $TM\oplus \i^*TX$.
\end{definition}

\begin{remark}  A Type I D-brane in $(X, \a)$ has its support on a manifold $M$ if and only if
there is a differentiable map $\i: M\to X$ such that
\[
\i^*([\a] ) + w_2(\nu_\i) =0.
\]
Here $\nu_\i$ denotes the bundle $TM\oplus  \i^*TX$.  
\end{remark} 

Given a Type I D-brane in $(X, \a)$,   the  push-forward map in twisted KO-theory
\[
\xymatrix{
KO^*(M) \ar[r]^{\eta_* \qquad }_{\cong\qquad} & KO^*(M, \a\circ \i + o_\i)\ar[r]^{\qquad \i_!^{KO}  }&  KO^*(X, \a)}
\]
 defines  a canonical element  in $KO^*(X, \a)$. Every class
in $KO^*(X, \a)$ can be realized by a Type I  (generalized) D-brane in $(X, \a)$. Hence, we conclude that  the Type I D-brane charges  are classified by twisted KO-theory  $KO^{ev/odd}(X, \a)$.

  \end{document}